\documentclass[11pt]{amsart}

%

\usepackage{amsmath, amssymb}
\usepackage{graphicx}
\usepackage[usenames]{xcolor}
\usepackage{float}

\usepackage[margin=1.5in]{geometry}
\parindent=0pt
\parskip=5pt


\definecolor{darkblue}{rgb}{0,0,0.7}
\definecolor{darkred}{rgb}{0.7,0,0}
\usepackage[colorlinks,linkcolor=darkred, citecolor=darkblue,
pagebackref=true,pdftex]{hyperref}

\newcommand\Defn[1]{\textit{\color{blue}#1}}    


\newcommand\Lap{\mathcal{L}}

\newcommand\R{\mathbb{R}}
\newcommand\Z{\mathbb{Z}}
\newcommand\N{\mathbb{N}}
\newcommand\A{\mathcal{A}}
\newcommand\tA{\tilde{\A}}
\newcommand\B{\mathcal{B}}
\newcommand\x{\mathbf{x}}
\renewcommand\th{\tilde{h}}
\renewcommand\k{\mathbb{K}}

\DeclareMathOperator*\supp{supp}
\DeclareMathOperator*\relint{relint}

\renewcommand\Lap{\mathcal{L}}
\newcommand\Laplat{\Lambda}

\newtheorem{thm}{Theorem}[section]
\newtheorem{prop}[thm]{Proposition}
\newtheorem{lemma}[thm]{Lemma}

\newtheorem{cor}[thm]{Corollary}

\newtheorem{rem}[thm]{Remark}
\newtheorem{example}[thm]{Example}

\begin{document}

\title[Laplacian ideals, arrangements, and resolutions]{Laplacian ideals,\\
arrangements, and resolutions}

\author{Anton Dochtermann}
\address{Department of Mathematics, University of Miami, USA}
\email{anton@math.miami.edu}

\author{Raman Sanyal}
\address{Fachbereich Mathematik und Informatik, %
Freie Universit\"at Berlin, %
Germany}
\email{sanyal@math.fu-berlin.de}

\keywords{Graph Laplacian, chip-firing, lattice ideal, initial ideal, G-parking function, cellular resolution, graphical arrangement, acyclic orientation}
\subjclass[2010]{05E40, 05C25, 13D02, 52C35}

\date{\today}

\thanks{Raman Sanyal has been supported by the
European Research Council under the European Union's Seventh Framework
Programme (FP7/2007-2013) / ERC grant agreement n$^\mathrm{o}$ 247029.}

\begin{abstract}
    The Laplacian matrix of a graph $G$ describes the combinatorial dynamics
    of the Abelian Sandpile Model and the more general Riemann-Roch theory of
    $G$.  The lattice ideal associated to the lattice generated by the columns
    of the Laplacian provides an algebraic perspective on this recently
    (re)emerging field.  This binomial ideal $I_G$ has a distinguished
    monomial initial ideal $M_G$, characterized by the property that the
    standard monomials are in bijection with the $G$-parking functions of the
    graph $G$.  The ideal $M_G$ was also considered by Postnikov and Shapiro
    (2004) in the context of monotone monomial ideals.  We study resolutions
    of $M_G$ and show that a minimal free cellular resolution is supported on
    the bounded subcomplex of a section of the graphical arrangement of $G$.
    This generalizes constructions from Postnikov and Shapiro (for the case of
    the complete graph) and connects to work of Manjunath and Sturmfels, and
    of Perkinson et al.\ on the commutative algebra of Sandpiles. As a
    corollary we verify a conjecture of Perkinson et al.\ regarding the Betti
    numbers of $M_G$, and in the process provide a combinatorial
    characterization in terms of acyclic orientations.

\end{abstract}
\maketitle

\section{Introduction}\label{sec:intro}

Let $G = (V,E)$ be a undirected and connected graph with vertex set $V = [n+1]
= \{ 1,2,\dots, n+1\}$.  The \emph{Laplacian} $\Lap(G)$ of $G$ is a symmetric
$(n+1) \times (n+1)$ matrix that encodes the dynamics of the \emph{chip-firing
game} on the graph $G$.  More recently the Laplacian has been central to the
study of a discrete version of the Riemann-Roch theorem for graphs, where
chip-firing serves as the graph theoretic notion of linear equivalence of
divisors.

In this paper we are interested in a certain class of ideals arising from
$\Lap(G)$.  We fix a field $\k$ and consider the lattice ideal $I_G \subset
\k[x_1, \dots x_{n+1}]$ associated to $\Lap(G)$.  By definition $I_G$ is
generated by binomials of the form ${\bf x^u} - {\bf x^v}$ where $\bf{u},
\bf{v} \in \N^{n+1}$ and $\bf{u} - \bf{v}$ is in the lattice spanned by the
rows of $\Lap(G)$.  Following the lead of \cite{MS} we call this ideal the
\emph{toppling ideal} of the graph $G$; it was first introduced by Perkinson,
Perlman and Wilmes in \cite{PPW}.

After fixing the vertex $n+1$, the ideal $I_G$ has a distinguished initial
monomial ideal $M_G$ with the property that the standard monomials of $M_G$
are in bijection with the so-called \emph{$G$-parking functions}.  This
monomial ideal was first studied by Postnikov and Shapiro~\cite{PS} in the
context of monotone monomial ideals and their deformations, and can be defined
by an explicit combinatorial rule (see below).  As is illustrated in
\cite{MS}, the ideal $M_G$ has interesting connections to the Riemann-Roch
theory of $G$.

In each of the papers \cite{MS}, \cite{PPW}, \cite{PS} various free
resolutions of the ideals $I_G$ and $M_G$ are considered.  In the case that $G
= K_{n+1}$ is a complete graph on $n+1$ vertices, it is shown in \cite{PS} that the
monomial ideal $M_G$ has a minimal cellular resolution supported on the first
barycentric division of a $(n-1)$-simplex.  This fact is used in \cite{PS},
where the authors describe resolutions of the lattice ideal $I_G$ in the case
that $G$ is a \emph{saturated} graph.  Indeed in this case the monomial ideal
in question is generic, and the resolution coincides with the \emph{Scarf
complex} of $M_G$.  By results of \cite{PeeStu}, this resolution lifts to the
Scarf complex of the lattice ideal $I_G$.  In \cite{PS} the authors show that
the barycentric subdivision of an $(n-1)$ simplex supports a resolution of
$M_G$ for an arbitrary graph $G$ on $n$ vertices. However, these resolutions
are typically far from minimal and thus directly
accessible homological information (such as (graded) Betti numbers) is limited.
In both \cite{PS} and \cite{MS} the issue of finding a minimal resolution for
the case of a general graph $G$ is left as an open question.

In this paper we describe a simple and explicit minimal cellular
resolution of the monomial ideal $M_G$. The polyhedral complex supporting the
resolution is obtained from the graphical hyperplane arrangement $\A_G$
associated to $G$. More precisely, the intersection of $\A_G$ with a certain
affine subspace yields the essential affine hyperplane arrangement $\tA_G$.
Our main result (Theorem \ref{thm:main}) is that $\B_G$, the bounded subcomplex
of $\tA_G$, supports a minimal free resolution of the monomial ideal $M_G$.  As
a corollary of our result we verify a conjecture of Perkinson et al.\
regarding the Betti numbers of $M_G$.  In particular the Betti numbers of
$M_G$ enumerate acyclic orientations of certain contractions of the graph $G$
(see Corollary \ref{cor:Betti_nums}).

In their work on Laplacian lattice ideals, Manjunath and Sturmfels~\cite{MS}
demonstrate how the duality involved in the discrete Riemann-Roch for certain
graphs can be expressed in terms of $M_G^*$, the ideal Alexander dual to $M_G$
with respect to the {canonical
monomial} of $G$.   Our constructions also lead to
an explicit description of a (co)cellular resolution of the ideal $M_G^*$ for all graphs $G$; see
Section~\ref{sec:duality}.

This collaboration began in Berlin in the summer of 2012, and the results of
this paper were first presented in the Combinatorics seminar at the University
of Miami in September 2012. While this paper was being prepared, the two
preprints~\cite{MSW} and~\cite{MohSho} were posted on the arXiv announcing
similar results. Both papers employ purely algebraic/combinatorial methods
while our perspective is that of geometric combinatorics.  In recent
conversations with the authors of~\cite{MohSho} we were made aware of their
independent work-in-progress involving cellular resolutions.

\section{Graphs, $G$-parking functions, and monomial ideals}\label{sec:graphs}

Throughout the paper we let $G = (V,E)$ be a finite, undirected graph on the
vertex set $V = [n+1] = \{1, 2, \dots, n+1\}$ and edge set $E$. We assume that
$G$ is connected and without loops but with possibly parallel edges, i.e.,
multiple edges between vertices $i$ and $j$.

We let $\Lap(G)$ denote the \Defn{Laplacian} of $G$.  Recall that $\Lap(G)$ is the symmetric
$(n+1) \times (n+1)$ matrix with $\Lap(G)_{ij} = -|\{ {\text{edges
between $i$ and $j$}}\}|$ if $i \not= j$ and equal to the degree of $i=j$, otherwise.
We will denote by $\Laplat(G)$ the sublattice of $\Z^n$ generated by the
rows of $\Lap(G)$. The Laplacian has been studied in various combinatorial settings including spectral graph theory~\cite{godsil}. Since $G$ is assumed to be connected, $\Lap(G)$ has a
one-dimensional kernel spanned by the vector $(1,1, \dots, 1)^t$.  The celebrated Matrix-Tree theorem (see~\cite[Sect.~13.2]{godsil}) asserts that $|\det \tilde\Lap(G)|$ is the number
of spanning trees of the graph $G$. This is an application of the Binet-Cauchy
theorem to the \Defn{truncated Laplace matrix} $\tilde\Lap(G)$, the matrix
obtained by deleting the $(n+1)$st (or any other) row and column from
$\Lap(G)$.  More recently, the Laplacian of $G$ has
appeared in the context of a discrete Riemann-Roch theory for graphs \cite{BN}, where it
encodes the dynamics of the so-called chip-firing moves (the discrete analogue
of linear equivalence of divisors).

We fix a field $\k$ and let $\k[x_1,x_2, \dots, x_{n+1}]$ denote the polynomial
ring in $n+1$ generators.  Associated to our graph $G$ we let $I_G$ denote the
\emph{lattice ideal} associated to $\Laplat(G)$. By definition $I_G$ is generated
by binomials of the form ${\bf x}^{\bf u} - {\bf x}^{\bf v}$, where ${\bf u} -
{\bf v}$ is in the lattice $\Laplat(G)$, generated by the columns of $\Lap(G)$,
\begin{equation}\label{Defn_of_IG}
    I_G \ = \ \langle {\bf x}^{\bf u} - {\bf x}^{\bf v}: {\bf u}, {\bf v} \in
    \N^{n+1}, {\bf u} - {\bf v} \in \Laplat(G) \rangle.
\end{equation}
Here we use the notation ${\bf x}^{\bf u} = x_1^{u_1}x_2^{u_2} \cdots
x_{n+1}^{u_{n+1}}$.  The ideal $I_G$ is called the \Defn{toppling ideal} of
the graph $G$ in~\cite{MS} and~\cite{PPW}.

The binomial ideal $I_G$ has a distinguished monomial initial ideal $M_G$,
characterized by the property that the standard monomials of $M_G$ are given
by the \emph{$G$-parking functions}.  The ideals $M_G$ have also been studied
in the context of monotone monomial ideals in \cite{PS}, and can be described
explicitly as follows. For any nonempty subset $I \subseteq [n]$ we define the
monomial
\begin{equation}\label{def:m_I}
    m_I \ := \ \prod \bigl\{ x_i : ij \in E, i \in I, j \in [n+1]\setminus I\}
    \ = \ \prod_{i \in I} x_i^{d_I(i)},
\end{equation}
where $d_I(i)$ denotes the number of edges from the vertex $i$ to a vertex in
$[n+1]\setminus I$.  Now define $M_G$ to be the ideal in $R = \k[x_1, \dots, x_n]$
generated by all $m_I$, as $I$ ranges over all nonempty subsets of $[n]$:
\begin{equation}\label{def:M_G}
    M_G \ = \ \langle m_I: \emptyset \neq I \subseteq [n] \rangle \ \subseteq
    \ \k[x_1,\dots,x_n].
\end{equation}
Since $G$ is connected and $n+1 \not\in I$, $M_G$ is a proper monomial ideal
in $R$. For a subset $I \subseteq [n+1]$, let us denote by $G[I]$, the
\Defn{vertex-induced subgraph}, i.e.\ the graph with vertex set $I$ and edges
$\{ ij \in E: i,j \in I\}$.

\begin{prop}\label{prop:min_gen}
    The monomial $m_I$ is a minimal generator of $M_G$ if and only if
    $G[I]$ and $G[I^c]$ are connected.
\end{prop}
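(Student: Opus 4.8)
The plan is to reformulate minimality as a combinatorial divisibility condition and then read off both implications from the connectivity hypotheses on $G$, $G[I]$, and $G[I^c]$. First I would record that the exponent of $x_i$ in $m_I$ equals $d_I(i)$ for $i \in I$ and $0$ otherwise, so that $m_J \mid m_I$ is equivalent to: $d_J(i) = 0$ for all $i \in J \setminus I$ and $d_J(i) \le d_I(i)$ for all $i \in J \cap I$. Two elementary facts will be used repeatedly: the monotonicity $d_S(i) \ge d_T(i)$ whenever $i \in S \subseteq T$ (enlarging a set can only remove edges leaving it), and the fact that, $G$ being connected, the only $S \subseteq V$ with no edges to $V \setminus S$ are $S = \emptyset$ and $S = V$. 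Finally, $m_I$ is a minimal generator of $M_G$ precisely when no $m_J$ is a proper divisor of $m_I$, since every monomial of $M_G$ dividing $m_I$ is a multiple of some $m_J$.

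For the ``only if'' direction I would argue contrapositively, exhibiting in each failure case a set $J$ with $m_J$ a proper divisor of $m_I$. If $G[I^c]$ is disconnected, write $I^c = A \sqcup B$ with no edges between $A$ and $B$ and with $n+1 \in B$, so that $\emptyset \ne A \subseteq [n]$, and take $J := I \cup A$. Then $J^c = B$, so $d_J(i) = 0$ for $i \in A$, while $d_J(i) = d_I(i) - (\text{number of edges from } i \text{ to } A) \le d_I(i)$ for $i \in I$; thus $m_J \mid m_I$, and equality would force no edges between $I$ and $A$, making $A$ a nonempty proper subset of $V$ with no edges leaving it --- impossible. If instead $G[I]$ is disconnected, write $I = C \sqcup D$ with no edges between $C$ and $D$ and take $J := C$; since the edges from $i \in C$ to $J^c = D \cup I^c$ are exactly the edges from $i$ to $I^c$, one gets $d_J(i) = d_I(i)$ for $i \in C$ and hence $m_J \mid m_I$, with equality forcing $d_I(i) = 0$ for all $i \in D$ --- again a forbidden subset.

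For the ``if'' direction, assume $G[I]$ and $G[I^c]$ are both connected and suppose $m_J \mid m_I$ with $\emptyset \ne J \subseteq [n]$; I want to conclude $J = I$. If $J \not\subseteq I$, set $A := J \setminus I \ne \emptyset$; then $d_J(i) = 0$ for every $i \in A$, so every edge out of $A$ stays inside $J$, whence there are no edges from $A$ to $I^c \setminus A$. But $A$ is a nonempty proper subset of $I^c$ (it omits $n+1$), contradicting connectivity of $G[I^c]$. Hence $J \subseteq I$; if $J \subsetneq I$, monotonicity gives $d_J(i) \ge d_I(i)$ on $J$, which together with $d_J(i) \le d_I(i)$ yields $d_J(i) = d_I(i)$ for all $i \in J$, i.e.\ no edges from $J$ to $I \setminus J$ --- contradicting connectivity of $G[I]$. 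Therefore $J = I$, so no $m_J$ properly divides $m_I$ and $m_I$ is a minimal generator.

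The argument is entirely elementary; the step needing the most care is the ``only if'' direction, where one must check that the exhibited $m_J$ is a \emph{strict} divisor of $m_I$ rather than equal to it --- and this is precisely where global connectivity of $G$ (as opposed to connectivity of $G[I]$ or $G[I^c]$) enters. In the ``if'' direction the small but decisive observation is that $m_J \mid m_I$ already forces $d_J(i) = 0$ on $J \setminus I$, which constrains the edges enough to contradict connectivity of $G[I^c]$.
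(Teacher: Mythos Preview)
Your proof is correct and follows essentially the same approach as the paper: for the ``only if'' direction you exhibit the same divisors $m_{I \cup A}$ (when $G[I^c]$ disconnects) and $m_C$ (when $G[I]$ disconnects), and for the ``if'' direction you use the same monotonicity-plus-connectivity argument, just organized as a partition argument rather than the paper's path argument in $G[I^c]$. Your treatment is in fact more careful than the paper's in verifying \emph{strict} divisibility in the ``only if'' direction, which the paper leaves implicit.
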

\begin{proof}
    Suppose $G[I] = G[I_1] \uplus G[I_2]$. Then $m_{I} = m_{I_1}\cdot m_{I_2}$.
    Similarly, if $G[I^c] = G[J_1] \uplus G[J_2]$ and $n+1 \in J_2$, then
    $m_{I \cup J_1}$ divides $m_I$. Thus, $G[I]$ and $G[I^c]$ are connected
    for every minimal generator.

    Conversely, assume that $I \subseteq [n]$ satisfies the condition and
    assume that $m_J$ divides $m_I$ with $m_J \neq m_I$. Observe that $J$ cannot be a subset of
    $I$. By the definition of $m_I$ given in (\ref{def:m_I}) above we see that
    \begin{equation}\label{eqn:subsets}
        J \ \subseteq \  I \qquad \Longrightarrow \qquad d_J(j) \ \ge \ d_I(j) \quad
        \text{ for all } j \in J.
    \end{equation}

    Thus, let $j \in J \setminus I$. Along a path from $j \in I^c$ to $n+1$
    inside $G[I^c]$, there is an edge $st \in E$ such that $s \in J$ but $t
    \in J^c$. Since $s \in I^c$ we have $d_I(s) = 0$.  But then $d_J(s) > 0 = d_I(s)$ which contradicts that
    $d_J(i) \le d_I(i)$ for all $i \in [n]$.
\end{proof}

According to~\cite{BCT}, a \Defn{$G$-parking function} is a function $\phi :
[n] \rightarrow \Z_{\ge 0}$ such that for every $\emptyset \not= I \subseteq
[n]$, there is an $i \in I$ such that $0 \le \phi(i) < d_I(i)$. It is easily
seen that $\phi: [n] \rightarrow \Z_{\ge 0}$ is a $G$-parking function if and
only if $\x^\phi = x_1^{\phi(1)}x_2^{\phi(2)}\cdots x_n^{\phi(n)} \not\in
M_G$. Thus the $G$-parking functions of $G$ are exactly the non-vanishing monomials
in $\k[\x]/M_G$. It is known~\cite{Gab} that the number of $G$-parking
functions (and hence the $\k$-vector space dimension of $\k[\x]/M_G$) equals the
number of spanning trees of $G$ which, as we have seen, is given by
$|\det \tilde \Lap(G)|$.

To realize $M_G$ as an initial ideal of $I_G$, we first fix a spanning tree
$T$ of $G$ rooted at the vertex $n+1$, and choose an ordering of the variables
that is a linear extension of $T$. More concretely, we choose a total order
$\preceq$ of $\{x_1,\dots,x_{n+1}\}$ that satisfies $x_i \succ x_j$ if in $T$
the vertex $i$ lies on the unique path from $n+1$ to $j$.  We then take the
reverse lexicographic term order for this ordering of variables.  We call any
such monomial ordering a \Defn{spanning tree order}, borrowing the term from
\cite{MS}.

\begin{example}
    If $G = K_{n+1}$ is the complete graph  on $n+1$ vertices, we can take the
    usual reverse lexicographic term order. The ideal $M_G$ is the
    \Defn{tree ideal} on $n$ variables, as described in
    \cite[Sect.~4.3.4]{MilStu}. In this case the standard monomials of $M_G$
    are given by the classical parking functions studied in combinatorics,
    i.e., sequences of non-negative integers $(a_1, a_2, \dots, a_n)$ with the
    property that, when placed in weakly ascending order, sit below a
    staircase: $a_i \le |\{ j : a_j \le a_i \}|$ for all $i \in [n]$. The
    \emph{maximal} parking functions are given by the permutations of $n$,
    which also correspond to the generators of the ideal that is Alexander dual to
    $M_G$.
\end{example}

\begin{example}
    A graph $G$ is said to be \Defn{saturated} if $G$ has $u_{ij} > 0$ edges
    between any two vertices $i \neq j$.  In \cite{MS} it is shown that if $G$ is
    a saturated graph on $n+1$ vertices then the ideal $I_G$ is a generic
    lattice ideal. For such graphs an explicit set of $2^n - 1$ binomial
    generators is given that form a Gr\"obner basis of $I_G$ with respect to
    reverse lexicographic order.
\end{example}

We next introduce the example that will be used throughout the paper.

\begin{example}[Running example]
    Let $G$ be the 4-cycle with vertices $\{1,2,3,4\}$ and edges
    $\{12,23,34,14\}$.  The Laplacian $\Lap(G)$ is the $4 \times 4$ matrix
    given below.
    \[
        \Lap(G) \ = \ \left[
            \begin{array}{rrrr}
                2 & -1 & 0 & -1 \\
                -1 & 2 & -1 & 0 \\
                0 & -1 & 2 & -1 \\
                -1 & 0 & -1 & 2
            \end{array} \right].
    \]
    The ideal $I_G$ is given by
    \[
        I_G \ =  \ \langle x_1^2 - x_2x_4, x_2^2 - x_1x_3, x_3^2 - x_2x_4,
        x_4^2 - x_1x_3, x_1x_2 - x_3x_4, x_2x_3 - x_1x_4 \rangle.
    \]
    The monomial ideal $M_G$ is given by
    \[
        M_G = \langle x_1^2, x_2^2, x_3^2, x_1x_2, x_1^2x_3^2, x_2x_3, x_1x_3
        \rangle.
    \]
    The generator $m_{\{1,3\}} = x_1^2x_3^2$ is redundant, so $M_G = \langle
    x_1^2, x_2^2, x_3^2, x_1x_2, x_2x_3, x_1x_3 \rangle = \langle x_1, x_2,
    x_3 \rangle^2$.
\end{example}

\section{Cellular Resolutions}

Let $R = \k[x_1, \dots, x_n]$ denote the polynomial ring on $n$ variables with
the standard $\Z^n$-grading given by $\deg(\x^a) = a \in \Z^n_{\ge 0}$ for any monomial
$\x^a$.  For any graded $R$-module $M$, a $\Z^n$-graded \Defn{free resolution}
of $M$ is an exact sequence
\[
    0 \ \leftarrow \ M \ \xleftarrow{\phi_1} \ F_1 \ \xleftarrow{\phi_2}
    \ \cdots \ \xleftarrow{\phi_r} \ F_r \ \leftarrow \ 0,
\]
where each $F_i$ is a graded free $R$-module
\[
    F_i \ \cong \ \bigoplus_{\sigma \in \Z^n} R(-\sigma)^{\beta_{i,\sigma}}
\]
and where each $\phi_i$ is a graded map.
The resolution is called \Defn{minimal} if each of the $\beta_{i,\sigma}$ is
minimal among all graded free resolutions of $M$. In this case the
$\beta_{i,\sigma} = \beta_{i, \sigma}(M)$ are called the \Defn{finely} or
\Defn{$\Z^n$-graded Betti numbers} of the module $M$. The \emph{$\Z$-graded}
Betti numbers of $M$ are given by
\[
    \beta_{i,j}(M) \ = \ \sum_{|\sigma| = j} \beta_{i,\sigma}(M)
\]
where $|\sigma| = \sigma_1 + \sigma_2 + \cdots + \sigma_n$.
And finally, for each integer $i$, the $i$th (ungraded) Betti number of $M$ is given by
\[
    \beta_i(M) \ = \ \sum_j \beta_{i,j}(M).
\]

\newcommand\X{\mathcal{X}}

A \Defn{labeled polyhedral complex} is a polyhedral complex $\X$ together
with an assignment $a_F \in \N^{n}$ to each face $F \in \X$ such that
\[
    (a_F)_i \ = \ \max \{ (a_G)_i : G \subset F \}.
\]
for all $i = 1,2,\dots,n$.
Let $\X$ be a labeled polyhedral complex and let
\[
    M \ = \ M_\X \ = \ \langle\,\x^{a_F} : F \in \X\,\rangle
    \   \subseteq \   \k[x_1,\dots,x_n]
\]
be the monomial ideal generated by the labels. Clearly, $M_\X$ is generated by
the $0$-cells of $\X$. For $\sigma \in \N^n$, let $\X_{\le \sigma} \subseteq
\X$ be the subcomplex of faces $F$ for which $a_F \le \sigma$ componentwise.
Bayer and Sturmfels~\cite{BS} described how labeled complexes can encode
resolutions of $M$, that is, when the chain complex ${\mathcal F}_{\X}$ of
$\X$ gives rise to a $\Z^n$-graded resolution---a  \Defn{cellular resolution}---of $M$.  We refer to~\cite{MilStu} for further details from which the
following criterion is taken.

\begin{prop}[{\cite[Prop.~4.5]{MilStu}}]\label{prop:cell_res}
    Let $\X$ be a labeled polyhedral complex and let $M = M_\X \subset
    \k[x_1,\dots,x_n]$ be the associated monomial ideal.
    Then $\X$ supports a cellular resolution ${\mathcal F}_{\X}$ of $M$ if and only if the
    subcomplex $\X_{\le \sigma}$ is $\k$-acyclic for all $\sigma \in \N^n$.
    The resolution is minimal if $a_F \not= a_G$ for all faces $F \subset G
    \in \X$.
\end{prop}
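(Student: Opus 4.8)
The plan is to reduce exactness of ${\mathcal F}_\X$ to a family of statements about ordinary reduced cellular homology of the subcomplexes $\X_{\le \sigma}$, one for each multidegree $\sigma \in \N^n$, following the strategy of Bayer and Sturmfels~\cite{BS}. First I would spell out ${\mathcal F}_\X$: it is the $\Z^n$-homogenization of the augmented cellular chain complex of $\X$ over $\k$; in homological degree $i$ it is $\bigoplus_{\dim F = i-1} R(-a_F)$, with differential $[F] \mapsto \sum_{F'} [F:F']\,\x^{a_F - a_{F'}}\,[F']$ summed over the facets $F' \subset F$, where $[F:F'] \in \{0, \pm 1\}$ is the incidence number. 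Each $\x^{a_F - a_{F'}}$ is a genuine monomial because the labeling condition forces $a_{F'} \le a_F$ whenever $F' \subseteq F$, and $\phi_i\phi_{i+1}=0$ because $\partial^2 = 0$. The augmentation $\phi_1 \colon \bigoplus_v R(-a_v) \to M$ sends $[v]\mapsto\x^{a_v}$ and is onto since $M = M_\X$ is generated by the vertex labels.

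The crux is to examine the augmented complex $0 \leftarrow M \xleftarrow{\phi_1} {\mathcal F}_\X$ one $\Z^n$-graded strand at a time; because the complex and its homology are $\Z^n$-graded and taking a graded piece is exact, exactness may be checked degree by degree. Fix $\sigma \in \N^n$. The module $R(-a_F)$ is one-dimensional in degree $\sigma$, spanned by $\x^{\sigma - a_F}$, exactly when $a_F \le \sigma$, and is zero otherwise, so the degree-$\sigma$ strand of ${\mathcal F}_\X$ is the $\k$-vector space on the faces of $\X_{\le\sigma}$; under $[F] \leftrightarrow \x^{\sigma - a_F}[F]$ the differential becomes the plain cellular boundary $[F]\mapsto\sum_{F'}[F:F']\,[F']$ of $\X_{\le\sigma}$. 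Likewise $\x^\sigma \in M$ iff some vertex label has $a_v \le \sigma$ iff $\X_{\le\sigma}\neq\emptyset$. Hence the degree-$\sigma$ component of $0 \leftarrow M \xleftarrow{\phi_1} {\mathcal F}_\X$ is canonically the augmented cellular chain complex of $\X_{\le\sigma}$ over $\k$ (the copy of $\k$ coming from $M$ being the augmentation term), or the zero complex if $\X_{\le\sigma} = \emptyset$.

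Both halves now follow. The sequence $0 \leftarrow M \xleftarrow{\phi_1} {\mathcal F}_\X$ is exact iff each graded strand is, iff $\tilde H_*(\X_{\le\sigma};\k)=0$ for every $\sigma$ with $\X_{\le\sigma}\neq\emptyset$ --- the empty strands being exact for free, since there $M$ and all of ${\mathcal F}_\X$ vanish --- that is, iff $\X_{\le\sigma}$ is $\k$-acyclic for all $\sigma$, under the usual convention that the empty complex is acyclic. So ${\mathcal F}_\X$ is a free resolution of $M$ precisely under that hypothesis. For minimality I would invoke the standard fact that a $\Z^n$-graded free resolution over $R$ is minimal iff every differential has all its entries in $\mathfrak m = (x_1,\dots,x_n)$, equivalently iff $\phi_i \otimes_R \k = 0$ for all $i$. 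The nonzero entries of $\phi_i$ are the monomials $\pm\x^{a_F - a_{F'}}$ with $F'$ a facet of $F$, and such a monomial lies in $\mathfrak m$ exactly when $a_{F'} \neq a_F$; so if $a_F \neq a_G$ for all proper faces $F \subset G$ of $\X$ --- in particular for all facet pairs --- the resolution is minimal.

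I expect the reduction to graded strands to be essentially immediate, so the bookkeeping is where any care is needed: the shift between homological degree in ${\mathcal F}_\X$ and the dimension of a cell, the reading of the augmentation term as a $(-1)$-cell, and the edge case $\X_{\le\sigma}=\emptyset$, where one must notice that no acyclicity hypothesis is required because $M$ and ${\mathcal F}_\X$ are already zero in that degree. The move from simplicial to polyhedral complexes is harmless --- the cellular chain complex of a polyhedral complex still squares to zero and computes the reduced homology of the underlying space, which is all that enters. The only genuinely external input is the cited characterization of minimal free resolutions by the absence of unit entries in the differentials.
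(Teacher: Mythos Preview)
Your argument is correct and is precisely the standard proof of this result, as found in the cited reference \cite[Prop.~4.5]{MilStu} and originating with Bayer--Sturmfels~\cite{BS}. Note that the paper itself does not supply a proof: the proposition is quoted from Miller--Sturmfels and used as a black box, so there is no paper proof to compare against beyond observing that your write-up matches the source.
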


Resolutions of $I_G$ and $M_G$ provide a new perspective on the duality
expressed in the Riemann-Roch theory of the graph $G$ via the notion of
Alexander duality.  The Betti numbers of $M_G$ have combinatorial
interpretations in terms of the graph $G$, and, as we will see, also relate to
certain well-studied geometric complexes.

In~\cite{MS} the authors consider resolutions of $I_G$ and $M_G$. They show
that in the case of saturated graphs $G$, the toppling ideal $I_G$ has a
minimal cellular resolution supported on what we will denote as
$\B(\Delta_{n-1})$, the first barycentric subdivision of an $(n-1)$-dimensional
simplex.

\begin{figure}[H]
\begin{center}
  \includegraphics[scale = .9]{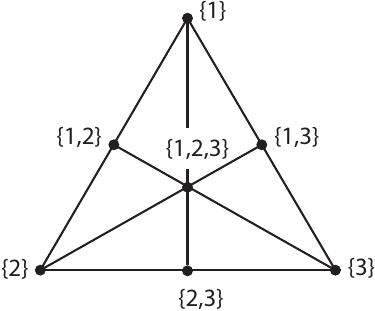}
\label{resolutioncomplete}
    \caption{The (minimal) free resolution of $M_G$ for $G = K_4$ is supported
    on $\B(\Delta_2)$.  Vertices are labeled by the subsets $I \subseteq [n]$
    corresponding to generators $m_I$.}

\end{center}
\end{figure}

In this case the complex $\B(\Delta_{n-1})$ can be lifted to the
\emph{Scarf complex} associated to the (in this case generic) lattice ideal
$I_G$. This extends a result from \cite{PS}, where it is shown that the
monomial ideal $M_G$ for $G=K_{n+1}$ has a resolution supported on the same
complex $\B(\Delta_{n-1})$.

For an arbitrary (not necessarily saturated) graph $G$, it is shown in
\cite{MS} that the complex $\B(\Delta_{n-1})$ supports a generally non-minimal
cellular resolution of $I_G$.  This leads to a formula for the Betti numbers
of $M_G$ in terms of the ranks of the reduced homology of certain induced
subcomplexes of $\B(\Delta_{n-1})$, although it is not clear how one might
obtain an explicit expression.

The description of a \emph{minimal} resolution of $M_G$ for an arbitrary graph
$G$ is stated as an open question in both \cite{MS} and \cite{PS}.  In
\cite{PPW} the authors provide a conjecture for the Betti numbers of $M_G$ in
the context of combinatorial data arising from Riemann-Roch theory of $G$.  In Section \ref{sec:cell_graph} we confirm this conjecture.

\section{Chip-firing and superstable
configurations}\label{sec:chip}

The Laplacian $\Lap(G)$ describes the dynamics of the so-called
\emph{chip-firing game} or \emph{Abelian Sandpile Model} for $G$.  We outline
the basic construction here and refer to~\cite{PPW} for further details.  In
this context, let us choose $n+1$ as a fixed sink.  By a \Defn{configuration} $c$
on $G$ we mean a placement of a number $c_i$ of `chips' or `grains of sand' on
each non-sink vertex $i \in [n]$. A vertex $i$ is said to be \Defn{unstable}
if the number of chips $c_i$ on $i$ is greater or equal to the degree of $i$.
In this case, one can then `fire' the vertex $i$, distributing chips to each
of its neighbors (one chip along every edge).  Chips that are sent to the sink
vertex disappear. It is easily seen that `firing' the vertex $i$ corresponds
to subtracting the $i$-th column of the truncated Laplacian $\tilde\Lap(G)$
from
$c$. If $G$ is connected, repeated chip-firing of
vertices eventually leads to a stable configuration $\bar c$.  One of the
fundamental results for chip-firing games is that this configuration
$\bar c$ is independent of the sequence of firings; we call $\bar c$ the
\Defn{stabilization} of $c$.

A stable configuration $c$ on $G$ is said to be \Defn{recurrent} if $c$ has a
nonnegative value on every non-sink vertex $[n]$ and if for any configuration
$a$ there exists a configuration $b$ such that $a+b$ stabilizes to $c$. The
stabilization process gives rise to a monoidal structure on the set of all
configurations, and forms a group when restricted to the recurrent
configurations of $G$.  This group is called the \emph{sandpile group} of $G$,
denoted $\mathcal{S}(G)$. The sandpile group can also be realized by
considering the discrete subgroup $\mathbb{L}_G \subset \Z^n$ generated
by the columns of the truncated Laplacian $\tilde\Lap(G)$.  We then have an
isomorphism
\[
    \mathcal{S}(G) \ \cong \ \Z^n/\mathbb{L}_G
\]
given by $c \mapsto c + \mathbb{L}_G$.  Thus
every element of $\Z^n$ is equivalent to a
recurrent configuration modulo the reduced Laplacian.  As a
corollary to the Matrix-Tree theorem we then see that the order of
$\mathcal{S}(G)$ is given by the number of spanning trees of $G$.

The \Defn{canonical configuration} $c_{\omega}$ on $G$ is
the maximally stable configuration given by
\[
(c_{\omega})_i \ = \ |\{ ij \in E : j \in [n]\}| - 1
\]
for every non-sink $i \in [n]$.  In the graph-theoretic
Riemann-Roch theory developed in \cite{BN}, configurations are naturally
identified with `divisors' on the graph $G$.  In this context the configuration $c_{\omega} - {\bf 1}$
is closely related to the `canonical divisor' of $G$.

To fully describe our results we will need a few more notions from the
chip-firing literature. As opposed to firing one vertex at a time, we consider a rule where one may fire sets of vertices simultaneously. This leads
to a stronger version of stability and a resulting notion of
\emph{superstable} configurations.  We will not detail the construction here
as it will be enough for us to use the following characterization (\cite{PPW}): a sequence
$a = (a_1, a_2, \dots, a_n)$ is a \Defn{superstable configuration} of $G$ if
and only if $a$ is a $G$-parking function. For the next result, recall that an \Defn{acyclic
orientation} of $G$ is an orientation $\mathcal{O}$ of the edges of $G$ with no directed cycles.

\begin{thm}[{\cite[Thm.~3.1]{BCT}}]
    There is a bijection between the set of acyclic orientations of $G$ with
    unique sink $n+1$ and the set of maximal superstable configurations of $G$.  Given an acyclic orientation $\mathcal{O}$, the
    corresponding configuration $c = c^{\mathcal{O}}$ is given by
    \[
        c_i \ = \ |\{ i \rightarrow_{\mathcal{O}} j : j \in [n+1]\}| - 1.
    \]
\end{thm}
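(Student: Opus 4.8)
The plan is to show that the assignment $\mathcal{O}\mapsto c^{\mathcal{O}}$ is a well-defined map into the set of \emph{maximal} $G$-parking functions (recall that ``superstable'' and ``$G$-parking function'' are synonymous here), and that it is both injective and surjective onto that set. Write $\outdeg_{\mathcal{O}}(v)$ for the out-degree of a vertex $v$ in $\mathcal{O}$, so that $c^{\mathcal{O}}_i=\outdeg_{\mathcal{O}}(i)-1$ for $i\in[n]$. Because $n+1$ is the \emph{unique} sink, no vertex of $[n]$ is a sink, so $\outdeg_{\mathcal{O}}(i)\ge 1$ and $c^{\mathcal{O}}\in\N^{n}$. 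I will also use that the $G$-parking functions form an order ideal in $\N^{n}$: if $\phi\le\psi$ componentwise and $\psi$ is a $G$-parking function, then so is $\phi$, since for any $I$ an index witnessing the defining inequality for $\psi$ also witnesses it for $\phi$.

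First I would check that $c^{\mathcal{O}}$ is a $G$-parking function. Fix $\emptyset\neq I\subseteq[n]$ and choose $i\in I$ that is a sink of the orientation $\mathcal{O}$ restricted to $G[I]$; such $i$ exists because a finite acyclic digraph has a sink. Then $i$ has no out-edge inside $I$, so all of its out-edges go to $[n+1]\setminus I$ and $\outdeg_{\mathcal{O}}(i)\le d_I(i)$, whence $c^{\mathcal{O}}_i=\outdeg_{\mathcal{O}}(i)-1<d_I(i)$. For maximality it is enough, by the order-ideal property, to show $c^{\mathcal{O}}+e_j$ is not a $G$-parking function for every $j\in[n]$. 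Let $I$ be the set of vertices of $G$ from which $j$ can be reached by a directed path in $\mathcal{O}$, so $j\in I$; since $n+1$ has no out-edge, $n+1\notin I$, and thus $\emptyset\neq I\subseteq[n]$. If $st\in E$ with $s\in I$ and $t\notin I$, the edge cannot be oriented $t\to s$ (that would place $t$ in $I$), so it is oriented $s\to t$; hence every edge from $I$ to its complement points out of $I$, giving $d_I(i)\le\outdeg_{\mathcal{O}}(i)$ for all $i\in I$. If moreover $i\in I\setminus\{j\}$, the first edge of a directed path from $i$ to $j$ remains in $I$, so $i$ has an out-edge inside $I$ and $d_I(i)\le\outdeg_{\mathcal{O}}(i)-1=c^{\mathcal{O}}_i$. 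Together with $d_I(j)\le\outdeg_{\mathcal{O}}(j)=c^{\mathcal{O}}_j+1$, this shows $(c^{\mathcal{O}}+e_j)_i\ge d_I(i)$ for every $i\in I$, i.e. $m_I\mid\x^{c^{\mathcal{O}}+e_j}$, so $c^{\mathcal{O}}+e_j$ is not a $G$-parking function.

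For injectivity I would invoke the standard fact that an acyclic orientation is determined by its out-degree sequence: if two acyclic orientations have the same out-degrees, then the set of edges on which they disagree, oriented according to the first, has equal in- and out-degree at every vertex, so it would contain a directed cycle if nonempty -- impossible inside an acyclic orientation. Since $c^{\mathcal{O}}$ records $\outdeg_{\mathcal{O}}$ on $[n]$ and $\outdeg_{\mathcal{O}}(n+1)=0$, distinct orientations give distinct $c^{\mathcal{O}}$. For surjectivity, take a maximal $G$-parking function $\phi$ and run the burning process from the sink: starting with $n+1$ burnt, repeatedly ignite one unburnt vertex $v$ whose number of burnt neighbors exceeds $\phi(v)$, breaking ties arbitrarily, until no such $v$ remains. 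If the unburnt set $U$ were nonempty, the $G$-parking condition applied to $U$ would yield $i\in U$ with $\phi(i)<d_U(i)=\#\{\text{burnt neighbors of }i\}$, so $i$ would still be ignitable; hence all vertices burn, in a total order. Orienting each edge toward its earlier-burnt endpoint gives an orientation $\mathcal{O}$ in which the burn-time strictly decreases along every directed edge, so $\mathcal{O}$ is acyclic; $n+1$ is its unique sink, since $n+1$ burns first and every other vertex had a burnt neighbor when it ignited; and $\outdeg_{\mathcal{O}}(v)$ is the number of neighbors of $v$ burnt before $v$, which is at least $\phi(v)+1$. Thus $c^{\mathcal{O}}\ge\phi$ componentwise, and since $c^{\mathcal{O}}$ is a maximal $G$-parking function (by the previous paragraph) while $\phi$ is maximal, $c^{\mathcal{O}}=\phi$.

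The step I expect to be the real work is the maximality claim together with the way it feeds into surjectivity: one has to be sure the ``reachability cone'' $I$ genuinely certifies $\x^{c^{\mathcal{O}}+e_j}\in M_G$, and that the burning construction returns $\phi$ \emph{exactly}, not merely something above it -- which is exactly where the order-ideal property and the built-in maximality of the burning output are used. The remaining pieces -- a finite acyclic digraph has a sink, an acyclic orientation is determined by its out-degree sequence, and the burning process halts only once everything is burnt -- are routine.
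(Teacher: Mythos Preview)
The paper does not prove this statement at all: it is quoted verbatim from~\cite{BCT} with no argument given, so there is no ``paper's proof'' to compare against. Your write-up is therefore being assessed on its own merits.

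There is one point you should flag explicitly rather than slide past. The displayed formula in the statement is $c_i = |\{\, i \to_{\mathcal{O}} j : j \in [n+1]\,\}| = \outdeg_{\mathcal{O}}(i)$, yet you immediately write $c^{\mathcal{O}}_i = \outdeg_{\mathcal{O}}(i) - 1$. Your version is the correct one and the paper's display is off by one: with the formula as printed, any source $s \in [n]$ of $\mathcal{O}$ has $c_s = \deg(s)$, which already violates the $G$-parking condition for $I = \{s\}$ (and one can check directly in the paper's own running example $G = C_4$ that the three out-degree vectors $(1,2,1),(2,1,1),(1,1,2)$ are not parking functions, whereas $(0,1,0),(1,0,0),(0,0,1)$ are exactly the maximal ones). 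So you have silently repaired a typo; say so.

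With the corrected formula your argument is complete and is essentially the standard one: sinks of $\mathcal{O}|_{G[I]}$ witness the parking condition; the backward-reachability set of $j$ witnesses $m_I \mid \x^{c^{\mathcal{O}} + e_j}$; the out-degree sequence determines an acyclic orientation (your Eulerian-subgraph argument is clean); and Dhar's burning algorithm produces an acyclic orientation with $c^{\mathcal{O}} \ge \phi$, which together with maximality of both sides forces equality. One cosmetic point: since the paper allows parallel edges, ``number of burnt neighbors'' should be read as ``number of edges to burnt vertices'' throughout the burning step; the argument goes through unchanged.
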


We note that the bijection described in \cite{BCT} actually involves the set of acyclic orientations with unique \emph{source} $n+1$, but of course there is a bijection between these and orientations with unique sink $n+1$ by reversing all arrows.

In \cite{PPW} (Corollary 5.15) it is shown that a configuration $c$ is superstable if and only if $c_{\omega} - c$ is recurrent. Hence for an undirected graph $G$ on vertex set $[n+1]$, there is a bijective correspondence between: minimal recurrent configurations, maximal superstable
configurations, maximal $G$-parking functions, and acyclic orientations with
$n+1$ as the unique sink vertex.

\section{Graphical Arrangements and Whitney numbers}

In this section we introduce graphical hyperplane arrangements and other
notions from geometric combinatorics that will serve to describe our
resolutions.  We describe the basic constructions and terminology here, and
refer to~\cite{GreZas} and~\cite{StaHyp} for more details.

Once again we fix our graph $G$ on vertex set $V = [n+1]$ with edge set $E$.
For an edge $ij \in E$, we define a corresponding hyperplane
\[
    h_{ij} \ := \ \{x \in \R^{n+1}: x_i = x_j \}.
\]
The arrangement $\A_G = \{h_{ij} : ij \in E\}$ of hyperplanes in $\R^{n+1}$ is
called the \Defn{graphical arrangement} of $G$.  The arrangement $\A_G$
dissects space into polyhedra, called \Defn{cells}, of various dimensions, and
we use $f_k(\A_G)$ to denote the number of $k$-dimensional cells, or
$k$-cells, for short.  A \Defn{flat} of $\A_G$ is a nonempty intersection of
elements of ${\mathcal A}_G$, and we let $L_G = L(\A_G)$ denote the collection
of flats partially ordered by reverse inclusion.  In fact $L_G$ is a ranked
(geometric) lattice, called the \Defn{lattice of flats}, with minimum
$\R^{n+1}$ and maximum $\hat{1} = \bigcap_{ij \in E} h_{ij}$. The rank
$\mathrm{rk}_G(x)$ of $x \in L_G$ is given by $n+1 - \dim x$ and, since $G$ is
connected, $L_G$ has total rank $ \mathrm{rk}(\hat{1}) = n$ (which we also
take to be the rank of $\A_G)$.
The \Defn{lattice of partitions} of $G$ is the collection of unordered
partitions $V = V_1 \uplus V_2 \uplus \cdots \uplus V_m$ such that the
vertex-induced graph $G[V_k]$ is connected for all $1 \le k \le m$. The
partial order
is by coarsening: a partition $\{V_k\}_k$ is smaller than $\{U_l\}_l$ if every
$U_s$ is contained in some $V_t$. The lattice of partitions is naturally
isomorphic to $L_G$ by associating to $\{V_k\}_k$, the flat
\[
\bigcap \Bigl\{ h_{ij} : ij \in E, \{i,j\} \subseteq V_k \text{ for some } k
    \Bigr\}.
\]
We will freely use both perspectives on the elements of $L_G$.

Central to the study of hyperplane arrangements (and more general matroids) is
the notion of Whitney numbers.  Here the \Defn{doubly indexed Whitney numbers}
of the first kind are given by
\[
    w_{ij}(L_G) \ = \
    \sum \left\{ \mu_{L_G}(x,y) \;:\; x,y \in L_G, \mathrm{rk}(x) = i,
    \mathrm{rk}(y) = j\right\}
\]
where $\mu_{L_G}$ is M\"obius function of $L_G$.  The \Defn{Whitney numbers}
of $L_G$ are the simply indexed versions
\[
    w_j(L_G) \ = \ w_{0j}(L_G).
\]

There is a well known connection (see \cite{GreZas}) between the Whitney
numbers of $L_G$ and the chromatic polynomial of $G$ given by
\[
    \chi(t) \ = \ \sum_{j=0}^{n} w_j(L_G)t^{n-j}.
\]
We say that a hyperplane $H$ is in \Defn{general position} with respect to the
arrangement $\A_G$ if $\dim (x \cap H) = \dim x - 1$ for all flats $x \in L_G$.
If $\A$ is a hyperplane arrangement and $U$ an affine subspace not parallel to
any hyperplane, then the \Defn{restriction} of $\A$ to $U$ is given by $\A|_U =
\{ H \cap U : H \in \A \}$.  We will need some further results from
\cite{GreZas} that we collect here for future reference.

\begin{thm}[{\cite[Thm.~3.2]{GreZas}}]\label{thm:whitney_cells}
    Let $\mathcal{A}$ be an arrangement of linear hyperplanes of rank $r$ and
    let $k>0$.  Let $H$ be a hyperplane general with respect to $\A$.  Then
    the induced arrangement $\A|_H$ has $|\mu(0,1)| = |w_r(L_G)|$ relatively
    bounded regions and $|w_{d-k,r}|$ relatively bounded $k-1$ cells.
\end{thm}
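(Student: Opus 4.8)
The plan is to reduce both counts to Zaslavsky's theorem on bounded regions, exploiting that a generic hyperplane section merely truncates the lattice of flats. First I would quotient by the lineality space of $\A$, so that we may assume $\A$ is essential of rank $r$ in $\R^r$; then $L_G = L(\A)$ has top element $\hat 1 = \{0\}$, and ``relatively bounded'' means ``bounded''. (The general case follows afterwards, since a face of $\A|_H$ is relatively bounded exactly when its image in the essential quotient is bounded, and this is compatible with the restriction operations used below, so nothing is lost.)

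For the region count I would first record that, because $H$ is in general position, the assignment $x \mapsto x \cap H$ is an order-, rank-, and M\"obius-preserving bijection from $L_G \setminus \{\hat 1\}$ onto $L(\A|_H)$. Hence $\A|_H$ is a noncentral, essential affine arrangement of rank $r-1$ in $\R^{r-1}$ with $\chi_{\A|_H}(t) = \tfrac{1}{t}\bigl(\chi_\A(t) - \mu_{L_G}(\hat 0,\hat 1)\bigr)$, and since $\chi_\A(1) = \sum_{x \in L_G}\mu_{L_G}(\hat 0,x) = 0$ for any nonempty arrangement, we get $\chi_{\A|_H}(1) = -\mu_{L_G}(\hat 0,\hat 1)$. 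Zaslavsky's theorem (see \cite{StaHyp}) then gives $(-1)^{r-1}\chi_{\A|_H}(1) = (-1)^r\mu_{L_G}(\hat 0,\hat 1) = |\mu_{L_G}(\hat 0,\hat 1)| = |w_r(L_G)|$ relatively bounded regions, the last equality because $\mu_{L_G}(\hat 0,\hat 1)$ has sign $(-1)^r$ in a geometric lattice of rank $r$.

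For the cells I would argue by an affine-hull decomposition. Every $(k-1)$-cell $C$ of $\A|_H$ has affine hull $\mathrm{aff}(C)$ equal to a flat $y$ of $\A|_H$ of dimension $k-1$, hence of rank $r-k$; and $y = \bar y \cap H$ for a unique flat $\bar y \in L_G$ of rank $r-k$ (equivalently, of dimension $k$). The $(k-1)$-cells with affine hull $y$ are exactly the regions of the restricted arrangement $(\A|_H)^{y}$, which one checks is nothing but $(\A^{\bar y})|_{H\cap\bar y}$: the restriction of the central, essential rank-$k$ arrangement $\A^{\bar y}$ in $\bar y \cong \R^k$ --- whose lattice of flats is the interval $[\bar y,\hat 1] \subseteq L_G$ --- to the generic hyperplane $H\cap\bar y$. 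Applying the region count of the previous paragraph to $\A^{\bar y}$ produces $|\mu_{L_G}(\bar y,\hat 1)|$ relatively bounded cells with affine hull $y$. Summing over all $\bar y$ of rank $r-k$ and using that every $\mu_{L_G}(\bar y,\hat 1)$ carries the same sign $(-1)^k$ turns the sum of absolute values into $\bigl|\sum_{\mathrm{rk}(\bar y)=r-k}\mu_{L_G}(\bar y,\hat 1)\bigr| = |w_{d-k,r}(L_G)|$ (here $d = r$ after essentialisation), as desired.

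The main obstacle is the region count: it relies on Zaslavsky's theorem and on pinning down the characteristic polynomial of a generic section --- the only place the ``general position'' hypothesis really enters --- together with the elementary but necessary sign identity for the M\"obius function of a geometric lattice. Once that is in place, the cell count is essentially formal: the affine-hull decomposition reduces it to the region count on each restriction $\A^{\bar y}$, and the remaining work is the uniform-sign bookkeeping that converts a signed M\"obius sum into the absolute Whitney number $|w_{d-k,r}|$. A secondary technical point, needed only for the statement in its stated (possibly nonessential) generality, is to check that the notion of ``relatively bounded'' descends correctly through the essential quotient and is compatible with restriction to flats.
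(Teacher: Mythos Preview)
The paper does not prove this theorem: it is quoted verbatim from Greene--Zaslavsky \cite[Thm.~3.2]{GreZas} and used as a black box (only its Corollary~\ref{cor:whitney_orients} is invoked later, in the proof of Corollary~\ref{cor:Betti_nums}). There is therefore no ``paper's own proof'' to compare your proposal against.

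That said, your sketch is a correct and standard way to establish the Greene--Zaslavsky result. The two ingredients --- that a generic hyperplane section truncates the lattice of flats (so $\chi_{\A|_H}(t) = \tfrac{1}{t}\bigl(\chi_\A(t) - \mu(\hat 0,\hat 1)\bigr)$ and hence $\chi_{\A|_H}(1) = -\mu(\hat 0,\hat 1)$), together with Zaslavsky's bounded-region formula --- give the region count; and the affine-hull decomposition, reducing the count of bounded $(k-1)$-cells to the region count for each restriction $\A^{\bar y}$ with $\bar y$ of rank $r-k$, is exactly how Greene and Zaslavsky organise the argument. Two small points worth making explicit in a write-up: first, you should say that a $(k-1)$-cell of $\A|_H$ with affine hull $y$ is bounded in $H$ if and only if it is bounded as a region of $(\A|_H)^y$ (trivial, since they are the same set, but it is the link between the two paragraphs); second, that $H\cap\bar y$ is indeed generic with respect to $\A^{\bar y}$ follows immediately from the genericity of $H$ with respect to $\A$, since every flat of $\A^{\bar y}$ is a flat of $\A$. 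With those remarks your argument is complete.
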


\begin{cor}[{\cite[Cor.~7.3]{GreZas}}]\label{cor:whitney_orients}
    Let $G$ be a graph with vertex set $[n+1]$.  The number of acyclic
    orientations of all contractions $G/S$ in which $S \in L_G$ has $k$
    components, such that the vertex $n+1$ is the only sink, equals
    $|w_{n+1-k,n}(L_G)|$.
\end{cor}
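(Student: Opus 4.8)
The plan is to deduce the doubly-indexed statement from the singly-indexed Greene--Zaslavsky theorem: for a connected graph $H$, the number of acyclic orientations of $H$ with a prescribed unique sink equals $|\mu_{L_H}(\hat 0,\hat 1)|$, the absolute value of the top Whitney number of $L_H$. Within the present setup this singly-indexed result is the content of Theorem~\ref{thm:whitney_cells} (the count of relatively bounded regions) combined with the correspondence between relatively bounded regions of a generic section of $\A_H$ and acyclic orientations of $H$ with a fixed unique sink. I would then apply it to each contraction $G/S$ and add up the results.

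First I would make the flat--partition bookkeeping explicit. A flat $S \in L_G$ with $k$ components is a partition $V = V_1 \uplus \cdots \uplus V_k$ into connected blocks; by the rank formula $\mathrm{rk}_G(S) = (n+1) - k$, so the flats of interest are exactly those of rank $(n+1)-k$. The contraction $G/S$ is a connected graph on $k$ vertices (one per block), hence $L_{G/S}$ has total rank $k-1$. Using the standard fact that in a geometric lattice of flats the upper interval $[S,\hat 1]$ is isomorphic to the lattice of flats of the contraction, I get $\mu_{L_G}(S,\hat 1) = \mu_{L_{G/S}}(\hat 0,\hat 1) = w_{k-1}(L_{G/S})$. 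Applying the singly-indexed theorem to $H = G/S$, with the block containing $n+1$ playing the role of the sink, then shows that the number of acyclic orientations of $G/S$ having $n+1$ as the only sink equals $|w_{k-1}(L_{G/S})| = |\mu_{L_G}(S,\hat 1)|$.

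It remains to sum over all flats $S$ of rank $(n+1)-k$ and pull the absolute value outside the sum. This is legitimate because $L_G$ is a geometric lattice, so $\mu_{L_G}(S,\hat 1)$ is nonzero with sign $(-1)^{\mathrm{rk}(\hat 1)-\mathrm{rk}(S)} = (-1)^{k-1}$, which is the same for all $S$ of rank $(n+1)-k$. Therefore
\[
\sum_{\mathrm{rk}_G(S) = (n+1)-k} \#\bigl\{ \text{acyclic orientations of } G/S \text{ with unique sink } n+1 \bigr\}
\ = \ \Bigl| \sum_{\mathrm{rk}_G(S) = (n+1)-k} \mu_{L_G}(S,\hat 1) \Bigr|
\ = \ |w_{(n+1)-k,\,n}(L_G)|,
\]
the last equality being the definition of the doubly-indexed Whitney number together with the fact that $\hat 1$ is the unique flat of rank $n$. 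The main obstacle I anticipate is bookkeeping rather than conceptual: correctly aligning $\mathrm{rk}_G(S)$ with the vertex count of $G/S$, checking that the interval--contraction isomorphism sends $\hat 1$ to $\hat 1$ and the block of $n+1$ to a genuine vertex of $G/S$ eligible to serve as the unique sink, and verifying the sign-constancy used to commute the absolute value with the sum. The genuinely substantive input, the singly-indexed Greene--Zaslavsky theorem, is already supplied by Theorem~\ref{thm:whitney_cells}.
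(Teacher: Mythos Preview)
The paper does not give its own proof of this corollary: it is quoted verbatim as \cite[Cor.~7.3]{GreZas}, alongside Theorem~\ref{thm:whitney_cells}, as background from Greene--Zaslavsky. So there is no in-paper argument to compare against.

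Your derivation is correct and is essentially how Greene and Zaslavsky themselves obtain the corollary from the theorem. The steps all check out: flats of $L_G$ with $k$ blocks have rank $n+1-k$; the upper interval $[S,\hat 1]_{L_G}$ is canonically the lattice of flats of the contraction $G/S$ (this is the standard matroid fact $L(M)/S \cong L(M/S)$, specialized to graphic matroids); the singly-indexed Greene--Zaslavsky count applied to the connected graph $G/S$ with the block of $n+1$ as sink gives $|\mu_{L_G}(S,\hat 1)|$; and the sign-constancy of $\mu$ in a geometric lattice lets you pull the absolute value through the sum to reach $|w_{n+1-k,n}(L_G)|$. The only place where you lean on something not literally stated in Theorem~\ref{thm:whitney_cells} is the identification of relatively bounded regions of a generic section of $\A_H$ with acyclic orientations of $H$ having a prescribed unique sink; in the paper this correspondence is established (for the specific section $U$) in Proposition~\ref{prop:bnd_cell}, and more generally it is part of the Greene--Zaslavsky package you are already invoking. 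None of the bookkeeping worries you flag cause trouble.
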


\section{Cellular resolutions from graphical arrangements}
\label{sec:cell_graph}

In this section we describe our minimal cellular resolution of $M_G$ and
derive some consequences.  As above we fix a connected graph $G$ on vertex set
$[n+1]$ and let $\A_G \subset \R^{n+1}$ denote the associated graphical arrangement. Let
$U \subset \R^{n+1}$ be the affine subspace
\[
    U \ := \ \Bigl\{ x \in \R^{n+1} : x_{n+1} = 0,\, x_1 + x_2 + \cdots + x_n =
    1\Bigr\} \ \cong \ \R^{n-1}
\]
We let $\tA_G \ = \ \bigl\{ \th_{ij} :=  h_{ij} \cap U : ij \in E \bigr\}$ be the
restriction of $\A_G$ to $U$.  Note that $\tA_G$ is an essential arrangement of $|E|$ affine
hyperplanes (two hyperplanes $\th_{ij}$ and $\th_{kl}$ coincide iff $ij$ and
$kl$ are parallel).

For a point $p \in U$ such that $p_i \not= p_j$ for all $ij \in E$, we obtain
an orientation on $G$ by orienting $i \rightarrow j$ if $p_i > p_j$. It is
easy to see that this orientation is in fact acyclic.  If $p$ takes the same
value on some edges, we get an acyclic orientation on a certain contraction of
$G$ as follows.  From $p$ we get a partition of the vertex set $[n+1] = V_1
\uplus V_2 \uplus \cdots \uplus V_s$ where for each $k$ we place $i,j \in V_k$
if there is a path $i = i_0i_1\dots i_t = j$ in $G$ such that $p_{i_{h-1}} =
p_{i_{h}}$ for all $1 \le h \le t$. In particular, the induced subgraphs
$G[V_i]$ are connected and we denote by $G/p$ the result of contracting each
$G[V_i]$ to a single vertex. The remaining edges satisfy $p_i \not= p_j$ and
thus we get an acyclic orientation on $G/p$.

The \Defn{bounded complex} $\B_G$ is the polyhedral complex of bounded cells
of  $\tA_G$ in $U$.  We let $|\B_G|$ denote the underlying pointset in $U$.
The next result says that we can determine points $p$ of $|\B_G|$ in terms of
the associated graph $G/p$.

\begin{figure}[H]
\begin{center}
    \includegraphics[scale=0.9]{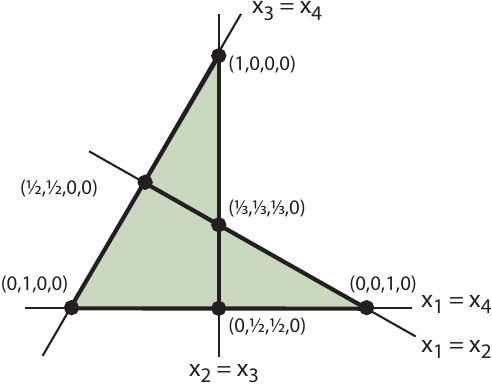}
\caption{The complex ${\mathcal B}_G$ for our running example.  The coordinates (as elements of $\R^4$) of the 0-cells are indicated.}
\label{fig:excomplex}
\end{center}
\end{figure}

\begin{prop} \label{prop:bnd_cell}
    For $p \in U$ let $C$ be the inclusion-minimal cell having $p$ in the
    relative interior. Then $\dim C + 2$ is the number of vertices of $G/p$, and
    $C \in \B_G$ if and only if the acyclic orientation on $G/p$ has a unique
    sink given by the vertex class containing $n+1$.
\end{prop}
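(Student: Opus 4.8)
The plan is to analyze the cell $C$ through the partition $[n+1] = V_1 \uplus \cdots \uplus V_s$ that $p$ induces, and then to translate "boundedness of $C$" into a statement about the acyclic orientation on $G/p$. First I would verify the dimension count. The cell $C$ is the intersection, taken inside $U$, of the hyperplanes $\th_{ij}$ for those edges $ij$ with $p_i = p_j$, together with the open halfspaces $\{x_i > x_j\}$ or $\{x_i < x_j\}$ for the remaining edges; minimality of $C$ means $p$ lies in the relative interior, i.e. exactly the edges within the $V_k$'s are "tight". By the dictionary between $L_G$ and the lattice of partitions recalled in Section~5, the flat $\bigcap\{h_{ij} : i,j \in V_k \text{ for some } k\}$ has dimension $n+1-\mathrm{rk}$, where the rank equals $\sum_k (|V_k| - 1) = (n+1) - s$; so that flat has dimension $s$, and intersecting with $U$ (two generic affine conditions, $x_{n+1}=0$ and $\sum_{i\le n} x_i = 1$) drops the dimension by $2$, giving $\dim C = s - 2$. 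Hence $\dim C + 2 = s$ is the number of vertices of $G/p$, as claimed.

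Next I would set up coordinates adapted to the partition. Writing $t_k$ for the common value of $x_i$ on $V_k$ (so $t_k = p_i$ for $i \in V_k$), the affine span of $C$ is parametrized by $(t_1,\dots,t_s)$ subject to $t_{k_0} = 0$, where $k_0$ is the index with $n+1 \in V_{k_0}$, and $\sum_k |V_k| t_k = 1$. Inside this span, $C$ itself is cut out by the strict inequalities $t_k > t_\ell$ for each edge of $G/p$ oriented $V_k \to V_\ell$ by the orientation $\mathcal{O}$ coming from $p$. So $C$ is (affinely isomorphic to) the region of the induced arrangement on this $(s-2)$-dimensional space selected by $\mathcal{O}$, and the question "is $C$ bounded?" becomes "is the set of $(t_k)$ with $t_{k_0}=0$, $\sum |V_k| t_k = 1$, and $t_k > t_\ell$ along all oriented edges of $G/p$, a bounded set?"

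The heart of the argument — and I expect the main obstacle — is the equivalence between boundedness of that region and $\mathcal{O}$ having a unique sink, namely $V_{k_0}$. For the forward direction: if $\mathcal{O}$ had a sink $V_m \ne V_{k_0}$, I would show $C$ is unbounded by exhibiting a ray along which one stays in $C$. Concretely, decreasing $t_m$ (pushing it toward $-\infty$) while holding all other $t_k$ fixed keeps every inequality $t_k > t_\ell$ satisfied along edges not incident to $V_m$, keeps $t_k > t_m$ along the edges into the sink $V_m$, and can be corrected to respect $\sum |V_k| t_k = 1$ by a compensating shift that preserves all the strict inequalities (since $G/p$ is connected one must argue this correction can be carried out; a clean way is to replace "hold others fixed" by "translate all coordinates by the same scalar", which is free modulo the normalization, and then only $t_m$ genuinely moves). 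This contradicts boundedness, so $V_{k_0}$ is the unique sink. For the converse, suppose $V_{k_0}$ is the unique sink of the acyclic orientation $\mathcal{O}$ on the connected graph $G/p$; then from every vertex $V_k$ there is a directed path to $V_{k_0}$, and along such a path the $t$-values strictly decrease to $0$, so $t_k > 0$ for all $k \ne k_0$, giving a lower bound $t_k \ge 0$; combined with the longest-directed-path argument (the value $t_k$ is bounded above by, e.g., the sum $\sum_\ell |V_\ell| \cdot (\text{something})$, or more simply: all $t_k \ge 0$ and $\sum |V_k| t_k = 1$ forces each $t_k \le 1/|V_k| \le 1$), the region is contained in a box and hence bounded. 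I would present the lower bound via directed paths to the sink as the key mechanism and note that the upper bound is then immediate from the normalization $\sum |V_k| t_k = 1$ together with $t_k \ge 0$. Finally, one checks the orientation "$\mathcal{O}$ on $G/p$" in this proposition agrees with the one defined just before Figure~\ref{fig:excomplex}, so the statement about "the acyclic orientation on $G/p$" is literally the one in the hypothesis; this is immediate from the definitions.
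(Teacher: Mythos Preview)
Your overall strategy matches the paper's: read off the dimension from the flat determined by $p$, then characterize boundedness via recession directions of the induced region. Your converse direction (unique sink at $V_{k_0}$ $\Rightarrow$ bounded) is in fact cleaner than the paper's: you observe directly that directed paths to the sink force $t_k>0$ for $k\neq k_0$, and then the normalization $\sum_k |V_k|t_k=1$ traps each $t_k$ in $[0,1]$. The paper instead argues this direction by contradiction from an unbounded ray.

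The forward direction, however, has a genuine gap. Your proposed ray ``decrease $t_m$, then correct by translating all coordinates by the same scalar'' does not stay in the affine span of $C$: the translation by a multiple of $\mathbf 1$ violates \emph{both} constraints $t_{k_0}=0$ and $\sum_k|V_k|t_k=1$ (and if you instead translate only the $s-1$ free coordinates, any out-edge $V_{k_0}\to V_\ell$ with $\ell\neq m$ gets reversed, since $u_{k_0}=0<u_\ell$). The paper's fix is to use a \emph{source--sink} direction: pick a source $V_r$ with $r\neq k_0$ and set $u=\tfrac{1}{|V_r|}e_r-\tfrac{1}{|V_m|}e_m$; then $u_{k_0}=0$, $\sum_k|V_k|u_k=0$, and $u_k\ge u_\ell$ along every oriented edge, so $p+\mu u$ stays in $C$ for all $\mu\ge 0$. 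The one thing to check is that a source $V_r\neq V_{k_0}$ exists; this follows from $C\neq\emptyset$, since if $V_{k_0}$ were the unique source then every $t_k$ with $k\neq k_0$ would be strictly below $t_{k_0}=0$, contradicting $\sum_k|V_k|t_k=1$. With this adjustment your argument goes through.
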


\begin{proof}
    Let $L \subset U$ be the intersection of all $\th_{ij}$ for which $p_i =
    p_j$. Then $C$ is a full-dimensional cell in $L$ and we have to determine
    only $\dim L$. But the hyperplane arrangement induced in $L$ by $\tA_G$ is
    isomorphic to the arrangement $\tA_{G/p}$ and thus $\dim L = |V(G/p)| - 2$.
    It thus suffices to assume that $p_i \neq p_j$ for all edges $ij \in E$.

    If $G$ has more than one sink, let $i \neq n+1$ be one of them and let $j$
    be an arbitrary source. We claim that $p$ is not contained in a bounded
    cell. Consider the halfline $\{ p(t) = p + t(e_j - e_i) : t \ge 0\}$.
    Since $j$ is a source we have $p_j > p_k$ for all $jk \in E$ and $p(t)_j
    \ge p_j > p_k \ge p(t)_k$ for all $t \ge 0$. An analogous argument applies
    for $i$ and shows that the halfline is contained in the same
    inclusion-minimal cell of $\tA_G$ as $p$.

    Conversely, assume $C$ is unbounded. Since $C \subset U$, there is a point
    $q \in \relint(C)$ with $q_i < 0$ for some
    $i$. All points in the relative interior of $C$ induce the same
    orientation on $G$. However, since $q_{n+1} = 0 > q_i$ and $G$ is
    connected, there is no directed path from $i$ to $n+1$ and thus $n+1$ is
    not a sink.
%
\end{proof}

In order for $\B_G$ to support a cellular resolution, we have to label the
zero-dimensional cells of $\B_G$. From Proposition~\ref{prop:bnd_cell}, we
infer that a $0$-cell $v$ of $\B_G$ is of the form $v = \frac{1}{|I|}e_I$ where
$e_I \in \{0,1\}^{n+1}$ is the characteristic vector of the non-empty subset
$I \subseteq [n]$.  Using the fact that subsets $I \subseteq [n]$ correspond
to monomials $m_I$ according to \eqref{def:m_I}, this gives us a natural
labeling of $\B_G$.

\begin{cor}\label{cor:zerocells}
    Under the labeling described above, the $0$-cells of $\B_G$ are labeled by the minimal generators of
    $M_G$. The label $a_v \in \N^n$ of a $0$-cell $v = \frac{1}{|I|} e_I$ of
    $\B_G$ is given by
    \[
        (a_v)_i \ := \ d_I(i)
    \]
    for all $i \in [n]$.
\end{cor}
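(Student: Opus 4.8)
The plan is to combine Proposition~\ref{prop:bnd_cell} with Proposition~\ref{prop:min_gen}. First I would spell out what Proposition~\ref{prop:bnd_cell} says for a $0$-dimensional cell $C = \{v\}$: since $\dim C = 0$, the graph $G/v$ has exactly two vertices, so the partition of $[n+1]$ induced by $v$ consists of two blocks, each inducing a connected subgraph (the blocks are the connected induced subgraphs contracted to the two vertices of $G/v$). Writing the block not containing $n+1$ as $I$, we obtain $\emptyset \neq I \subseteq [n]$ with $G[I]$ and $G[I^c]$ connected. Conversely, any such $I$ yields a genuine $0$-cell location, namely the point of $U$ that is constant on $I$ and zero on $I^c$; the affine constraint $x_1 + \cdots + x_n = 1$ forces that constant to be $1/|I|$, so $v = \tfrac{1}{|I|} e_I$, which recovers the normal form already observed above.

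Next I would check the membership condition ``$C \in \B_G$'' of Proposition~\ref{prop:bnd_cell}: the single edge of $G/v$ is oriented from the class of larger $v$-coordinate to the class of smaller one, and since $v$ equals $1/|I| > 0$ on $I$ and $0$ on $I^c$, it points from the $I$-class to the $I^c$-class. Hence the class of $n+1$ is the unique sink and the condition holds automatically. Therefore the $0$-cells of $\B_G$ are exactly the points $\tfrac{1}{|I|} e_I$ with $\emptyset \neq I \subseteq [n]$ and both $G[I]$ and $G[I^c]$ connected.

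By Proposition~\ref{prop:min_gen} these are precisely the subsets $I$ for which $m_I$ is a minimal generator of $M_G$, so under the labeling $\tfrac{1}{|I|} e_I \mapsto m_I$ the $0$-cells of $\B_G$ are labeled by the minimal generators of $M_G$. Finally, the formula for $a_v$ is read directly off \eqref{def:m_I}: since $m_I = \prod_{i \in I} x_i^{d_I(i)}$, its exponent vector has $i$-th entry $d_I(i)$ for $i \in I$ and $0$ otherwise, that is $(a_v)_i = d_I(i)$ for all $i \in [n]$, with the convention $d_I(i) = 0$ for $i \notin I$.

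The argument is short, and the only point deserving care is the reduction in the first paragraph: one must invoke the dimension count $\dim C = |V(G/v)| - 2$ from Proposition~\ref{prop:bnd_cell} to rule out subsets $I$ for which $G[I]$ or $G[I^c]$ is disconnected, since contracting a disconnected induced subgraph leaves $G/v$ with more than two vertices and hence $C$ with positive dimension; this is exactly what makes the minimal-generator condition of Proposition~\ref{prop:min_gen} match the $0$-cells.
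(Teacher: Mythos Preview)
Your proposal is correct and follows essentially the same route as the paper: both arguments combine Proposition~\ref{prop:bnd_cell} with Proposition~\ref{prop:min_gen} to identify the $0$-cells of $\B_G$ with the subsets $I \subseteq [n]$ having $G[I]$ and $G[I^c]$ connected, and then read off the label from~\eqref{def:m_I}. Your version is more explicit than the paper's (you spell out the dimension count $|V(G/v)| = \dim C + 2 = 2$ and verify the unique-sink condition directly), but the logical structure is identical.
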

\begin{proof}
    From~\eqref{def:m_I} and \eqref{def:M_G}, we know that $\x^{a_v} = m_I$ is
    in $M_G$. Now, by Proposition~\ref{prop:min_gen}, $m_I$ is a minimal
    generator if and only if $G[I]$ and $G[I^c]$ are connected and, by
    Proposition~\ref{prop:bnd_cell} this is the case if and only if $v =
    \frac{1}{|I|}e_I$ is a $0$-cell of $\B_G$.
\end{proof}

\begin{figure}[H]
\begin{center}
    \includegraphics[scale=0.75]{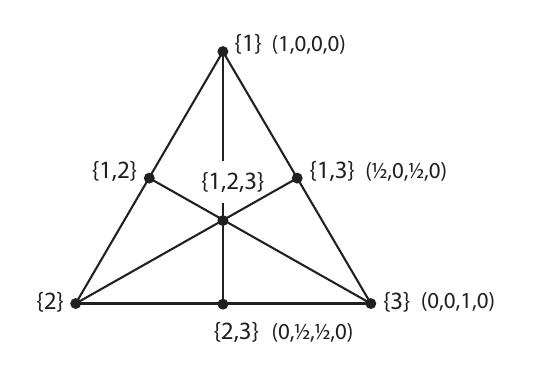}
    \includegraphics[scale=0.75]{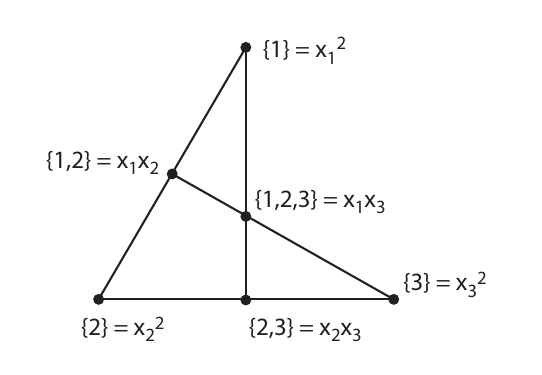}
\caption{The subset labeling of ${\mathcal B}_{K_{n+1}}$ corresponding to coordinates, and the induced monomial labeling of ${\mathcal B}_G$ for our running example $G$.}
\label{fig:monlabels}
\end{center}
\end{figure}

The labels on higher dimensional bounded cells are determined by the
componentwise maximum of the labels of incident $0$-cells.  That is, for a
cell $C \in \B_G$ and $i \in [n]$
\[
    (a_C)_i \ = \ \max\{ (a_v)_i : v \in C \textnormal{ is a $0$-cell} \}.
\]
However, we can also determine the labels of such cells directly.
\begin{prop}\label{prop:point_label}
    Let $C \in \B_G$ be a bounded cell with label $a_C \in \N^{n}$.
    For $p \in \relint C$, we have
    \[
    (a_C)_i \ = \ \#\{ij \in E : p_i > p_j \}
    \]
\end{prop}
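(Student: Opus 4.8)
The plan is to unwind the definition
$(a_C)_i = \max\{(a_v)_i : v \text{ a } 0\text{-cell of } C\}$ (recorded just before the statement), to use Corollary~\ref{cor:zerocells} which identifies $(a_v)_i = d_{I_v}(i)$, and to prove the two bounds $(a_C)_i \le \#\{ij \in E : p_i > p_j\}$ and $(a_C)_i \ge \#\{ij \in E : p_i > p_j\}$ separately. The preliminary step is a description of the faces of $C$. Since $C$ is the inclusion-minimal cell of $\tA_G$ having $p$ in its relative interior, its closure $\overline{C}$ is cut out in $U$ by the equations $x_j = x_k$ over the edges $jk$ with $p_j = p_k$ together with the inequalities $(p_j - p_k)(x_j - x_k) \ge 0$ over all edges $jk$. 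Substituting $v_I = \frac{1}{|I|}e_I$ one checks, using Corollary~\ref{cor:zerocells} and Proposition~\ref{prop:min_gen}, that a subset $\emptyset \ne I \subseteq [n]$ with $G[I]$ and $G[I^c]$ connected indexes a $0$-cell $v_I$ of $C$ if and only if $I$ is a union of the blocks of the partition $G/p$ and $I$ is closed under predecessors of the induced acyclic orientation, i.e.\ $jk \in E$, $p_j > p_k$, $k \in I$ force $j \in I$.

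For the upper bound, let $v = v_I$ be any $0$-cell of $C$ and let $ij \in E$ with $i \in I$ and $j \notin I$. Then $(v)_i = \tfrac{1}{|I|} > 0 = (v)_j$, and since $v \in \overline{C}$ the constraints above force $p_i > p_j$ (if $p_i = p_j$ then $\overline{C} \subseteq \th_{ij}$, contradicting $(v)_i \ne (v)_j$). Hence $d_I(i) = \#\{ij \in E : i \in I, j \notin I\} \le \#\{ij \in E : p_i > p_j\}$; taking the maximum over the $0$-cells of $C$ (and using that this count is nonnegative, so $0$-cells with $i \notin I$ cause no trouble) yields the upper bound.

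For the lower bound I would invoke the hypothesis $C \in \B_G$: by Proposition~\ref{prop:bnd_cell} the acyclic orientation on $G/p$ has a unique sink, the block containing $n+1$; since the block on which $p$ attains its minimum is always a sink (all its edges point toward it), that minimum value is $p_{n+1} = 0$, so $p_j \ge 0$ for every $j$. If $i$ lies in the sink block then $p_i = 0$, every admissible $I$ omits that block so $i \notin I$, and both sides of the identity vanish. Otherwise set
\[
    I \ := \ \bigl\{\, j \in [n+1] \;:\; \text{there is a path } j \rightsquigarrow i \text{ in } G \text{ along which } p \text{ is weakly decreasing} \,\bigr\},
\]
the union of the block of $i$ with all of its ancestor blocks in $G/p$. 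Then $I$ is a union of blocks, is predecessor-closed, contains $i$, and omits $n+1$ (so $I \subseteq [n]$); it contains no out-neighbour of $i$, since a weakly decreasing path from such a vertex back to $i$ would close a directed cycle; $G[I]$ is connected because the witnessing paths lie inside $I$; and $G[I^c]$ is connected because from any $j \notin I$ one can descend along out-edges of the orientation to the unique sink $n+1$ without leaving $I^c$. Thus $v_I$ is a $0$-cell of $C$, and by the sign analysis of the upper bound an edge $ij \in E$ has $j \in I^c$ exactly when $p_i > p_j$; hence $(a_C)_i \ge d_I(i) = \#\{ij \in E : p_i > p_j\}$. Combining the two bounds gives the claim.

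I expect the only real obstacle to be the lower bound, specifically exhibiting a genuine $0$-cell of $C$ that realizes the count — which comes down to verifying that the set $I$ above has both $G[I]$ and $G[I^c]$ connected. This is exactly where the assumption $C \in \B_G$ (equivalently, the unique-sink condition of Proposition~\ref{prop:bnd_cell}) is used in an essential way; for an unbounded cell $G[I^c]$ can fail to be connected.
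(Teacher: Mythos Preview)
Your proof is correct and follows essentially the same two-sided bounding strategy as the paper: the upper bound comes from the half-space description of $\overline{C}$ (the paper phrases this via convexity along the segment $[p,q]$), and the lower bound is obtained by exhibiting a specific $0$-cell $v_I$ of $C$ realizing the count. Your witness set $I$ (the ancestor blocks of $i$ under the induced orientation) differs slightly from the paper's choice (the component of $i$ in $G[\{v:p_v\ge p_i\}]$), and you verify $G[I^c]$ is connected by descending to the unique sink rather than by the paper's unboundedness contradiction, but the overall architecture is the same.
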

\begin{proof}
    Fix $1 \le i \le n$ and let $D_i(p) := \#\{ij \in E : p_i > p_j \}$.
    Consider the set $I \subseteq [n]$ of vertices of $G$ such that $k \in
    I$ if there is a path $k = i_0i_1\dots i_s = i$ and $p_{i_j} \ge p_i$ for all
    $0
    \le j \le s$. By construction $G[I]$ is connected. Assume that the
    complementary graph $G[I^c]$ is disconnected and let $J \subset I^c$ be
    a connected component not containing $n+1$.  Since $C$ is a cell in the
    hyperplane arrangement $\tA_G$ and $p$ is a point in the relative
    interior, $C$ is the set of points $x \in U$ with
    \begin{align*}
        x_i = x_j \ \text{ if } p_i = p_j \quad  \text{ and} \quad
        x_i \ge x_j \ \text{ if } p_i > p_j
    \end{align*}
    for all $ij \in E$. Observe that for all $t \ge 0$ the point $p(t) = p +
    \tfrac{t}{|I|} e_I - \tfrac{t}{|J|} e_J$ is in $C$. Indeed, $p(t) \in U$
    for all $t$. As for the defining equations and inequalities of $C$, the
    only relevant case to check is $kl \in E$ with $k \in I$ and $l \in J$. By
    definition of $I$, we have $p_k > p_l$ and $p(t)_k = p_k + \frac{t}{|I|}
    > p_l - \frac{t}{|J|}$ for all $t \ge 0$. This shows that $C$ is not
    bounded.

    It follows that $G[I^c]$ is connected and, by
    Proposition~\ref{prop:bnd_cell}, the point $q = \tfrac{1}{|I|} e_I$ is a
    $0$-cell of $\B_G$. To see that $q \in C$ let $kl \in E$ such that $p_k >
    p_l$ but $q_k < q_l$. This implies $l \in I$ and $k \in I^c$ but since
    $p_k > p_l$ and there is a path from $l$ to $i$ with values $\ge p_i$,
    this means $k \in I$. As for monomial labels we have $(a_C)_i \ge (a_q)_i
    = \#\{ ij \in E: j \not\in I\} = D_i(p)$.

    To see that $D(p)_i$ upper bounds $(a_C)_i$, observe that $\relint(C)
    \subseteq \{ x \in U : x_i > x_j \text{ for all } ij \in E \text{ with }
    p_i > p_j \}$ for $i$ fixed. In particular $C$ is in the closure of this
    set and every point $q$ in the closure satisfies $D(q)_i \le D(p)_i =
    (a_C)_i$.
%
\end{proof}

Here is the main lemma that we need.

\begin{lemma} \label{lem:main}
    Let $G = ([n+1],E)$ be a connected graph and $\B_G$ the bounded subcomplex labeled
    according to the monomial ideal $M_G$. For every $\sigma \in \N^n$, the
    set
    \[
    |B_G|_{\le \sigma} \ = \ \bigcup \{F \in B_G: a_F \le \sigma \}
    \]
    is star-convex.  Hence $|B_G|_{\le \sigma}$ is contractible and in particular $\k$-acyclic for any field $\k$.
    \end{lemma}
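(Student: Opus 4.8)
The plan is to show that $|\B_G|_{\le \sigma}$ is star-convex with respect to a well-chosen point, and then invoke the standard fact that a star-convex set is contractible and hence $\Z$-acyclic. The natural candidate for the center of the star is the point $p^\sigma \in U$ obtained by ``pushing everything as far down as $\sigma$ allows'': concretely, I would use the characterization in Proposition~\ref{prop:point_label}, which says that for $p \in \relint C$ the label is $(a_C)_i = \#\{ij \in E : p_i > p_j\}$, to identify which cells $C$ satisfy $a_C \le \sigma$ purely in terms of the orientation (or contraction-orientation) associated to $p$. The condition $a_C \le \sigma$ becomes: for every $i \in [n]$, the out-degree of $i$ in the orientation $G/p$ (lifted to $G$) is at most $\sigma_i$.

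First I would make precise the description of the center. Since $\sigma \in \N^n$ and $n+1$ plays the role of the sink, I expect the correct center to be the 0-cell or low-dimensional cell corresponding to a maximal subset $I \subseteq [n]$ (or a suitable partition) for which the out-degree bounds $d_I(i) \le \sigma_i$ hold; among all faces of $\B_G$ with label $\le \sigma$ this should be the ``deepest'' one, i.e.\ the one whose associated orientation has $n+1$ dominating as much as possible. One should check that such a face exists in $\B_G$ — this uses Corollary~\ref{cor:zerocells} and Proposition~\ref{prop:bnd_cell} to ensure the relevant subset $I$ has $G[I]$ and $G[I^c]$ connected, so that the corresponding cell really lies in the bounded complex. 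Call the resulting point $c_\sigma$.

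Next I would verify star-convexity: given any $q \in |\B_G|_{\le \sigma}$, I claim the segment $[c_\sigma, q]$ stays inside $|\B_G|_{\le\sigma}$. Boundedness along the segment is automatic because $|\B_G|$ is a union of bounded convex cells and a segment between two points of a polyhedral complex stays in the complex only if... — actually the subtle point is that $|\B_G|$ need not be convex, so I must argue the segment stays in $\B_G$ at all. Here the key is that for a point $p(t) = (1-t)c_\sigma + tq$ on the segment, the associated orientation interpolates monotonically: using the description in Proposition~\ref{prop:point_label} and the convexity argument already used there ($D_i$ is ``between'' its values at the endpoints), each out-degree $D_i(p(t))$ is bounded above by $\max(D_i(c_\sigma), D_i(q)) \le \sigma_i$, so the labels stay $\le \sigma$; and one checks the orientation of $p(t)$ still has $n+1$ as unique sink (because $c_\sigma$ was chosen so that $n+1$ dominates, and $q$ already has this property, and the set of vertices beating $n+1$ can only shrink along the segment toward $c_\sigma$), hence $p(t) \in |\B_G|$ by Proposition~\ref{prop:bnd_cell}. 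Combining, the whole segment lies in $|\B_G|_{\le\sigma}$.

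The main obstacle I anticipate is pinning down the correct center $c_\sigma$ and proving it genuinely works for \emph{every} $q$ simultaneously — in particular, verifying that along the segment the ``unique sink at $n+1$'' condition is preserved. This is really a statement that the partial order on orientations/contractions induced by $\sigma$-truncation has a bottom element compatible with all of $|\B_G|_{\le\sigma}$, and I'd expect to prove it by a direct edge-by-edge argument: for each edge $ij$, track the sign of $p(t)_i - p(t)_j$ and use that it can change at most once and in a controlled direction. Once star-convexity is established, $\Z$-acyclicity is immediate since a star-convex (hence contractible) subset of Euclidean space has the homology of a point, which is exactly the hypothesis needed to apply Proposition~\ref{prop:cell_res}.
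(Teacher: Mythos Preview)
Your overall strategy --- prove star-convexity of $|\B_G|_{\le\sigma}$ about a carefully chosen center --- is exactly the paper's approach. But the argument you sketch for the segment has a genuine gap.

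The claim ``$D_i(p(t)) \le \max\bigl(D_i(c_\sigma),\,D_i(q)\bigr)$'' is \emph{false} in general. For a single edge $ij$, the function $t \mapsto p(t)_i - p(t)_j$ is affine and changes sign at most once, but different edges incident to $i$ can swap sign in opposite directions. Concretely: if $i$ has neighbours $j,k$ and at $c_\sigma$ we have $i>j$, $i<k$ while at $q$ we have $i<j$, $i>k$, then $D_i = 1$ at both endpoints, yet for suitable coordinates the midpoint has $i>j$ and $i>k$, giving $D_i = 2$. So ``monotone interpolation of out-degrees'' does not follow from convexity alone; the convexity argument in Proposition~\ref{prop:point_label} works only because the segment there lies inside a \emph{single} cell.

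What the paper does instead is construct the center $q$ so that \emph{no hyperplane $\th_{ij}$ strictly separates $q$ from any $r \in |\B_G|_{\le\sigma}$}; then the open segment $(r,q)$ lies in one cell $C$, and it remains only to bound $a_C$ and to check $C$ is bounded. This ``no-separation'' property is the crux, and it depends on the precise recipe for $q$: set $J = \bigcup_k \supp(a^k)$ over the $0$-cells $p^k$ of $(\B_G)_{\le\sigma}$, let $K$ be the component of $n+1$ in $G[[n+1]\setminus J]$, and take $q = \tfrac{1}{|I|}e_I$ with $I = [n+1]\setminus K$. With this $q$ one checks $\supp(p^k) \subseteq I$ for every $k$, and hence $r_i > r_j \Rightarrow q_i \ge q_j$ for every edge $ij$. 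Your description of the center as ``a maximal subset $I$ with $d_I(i)\le\sigma_i$'' is not the same construction and does not obviously yield this property; nor does it guarantee that $G[I^c]$ is connected, which you need for $q$ to lie in $\B_G$ (your check via Corollary~\ref{cor:zerocells} only applies to $0$-cells, and the paper's $q$ need not be a $0$-cell). Likewise, your verification that $n+1$ remains the unique sink along the segment needs the specific structure of $q$ (in the paper this uses $\supp(r)\subseteq\supp(q)=I$), not just a vague monotonicity.
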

\begin{proof}
    Let $p^1, p^2, \dots, p^m$ be the $0$-dimensional cells of $(\B_G)_{\le
    \sigma}$ and for each $i$ let $a^i = a_{p^i} \in \N^n$ be the corresponding exponent vector of the monomial label. Let
    us define
    \[
        J \ = \ \bigcup_{i=1}^m \supp(a^i).
    \]
    In the subgraph of $G$ induced on the vertices $[n+1]\backslash J$, let
    $K$ be the set of vertices corresponding to the connected component containing the vertex $n+1$.  Finally, define
     \[
        I \ = \ [n+1] \backslash K \quad\text{ and }\quad q \ = \
        \frac{1}{|I|} e_I.
    \]
    We claim that $q$ is a star point of $|\B_G|_{\le \sigma}$. By
    construction, the contraction $G/q$ has a unique sink and thus $q$ is
    contained in the relative interior of some cell $\B_G$ with label $a_q$.

\begin{figure}[H]
\begin{center}
    \includegraphics[scale=0.9]{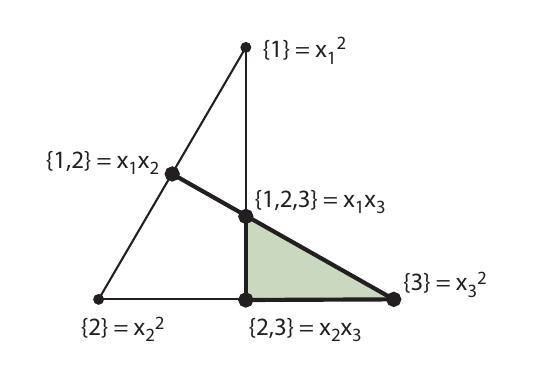}
\caption{The subspace $({\mathcal B}_G)_{\leq \sigma}$ for our running example $G$ with $\sigma = (1,1,2) = x_1x_2x_3^2$.  The star point in this case is $q = (\frac{1}{3}, \frac{1}{3}, \frac{1}{3}, 0)$.}
\label{fig:Cycledownset}
\end{center}
\end{figure}

    Let us next verify that $a_q \le \sigma$ so that indeed $q \in |\B_G|_{\le \sigma}$.  First we claim that $I_k := \supp(p^k)
    \subseteq I$ for all $k$ (note that $\supp(a^k) \subseteq \supp(p^k)$).   For this suppose that $i \in I_k$.  If $i \in \supp(a^k)$ then $i \in J$ and hence $i \in I$.  If $i \notin \supp(a^k)$ then by definition $d_{I_k}(i) = 0$.  Since $p_k$ is a $0$-cell of $\B_G$, both $G[I_k]$ and $G[I^c_k]$ are connected.  Therefore, any path from $n+1$ to $i$ has to include an edge $st$ with $s \notin \supp(a^k)$ and $t \in \supp(a^k)$.  Hence $i$ is not in the connected component of $n+1$ in the subgraph of $G$ induced on $[n+1]\setminus \supp(a^k)$.  Since $\supp(a^k) \subseteq J$ this in turn implies that $i \notin K$ so that $i \in I$. Now let $i \in I$ with $(a_q)_i = d_I(i) > 0$ and let $j \in I^c = K$ with $ij \in E$.
    Then there is a path from $n+1$ to $j$ that does not meet $J$.
    Hence, if $i \not\in J$, there is a path in $J^c$ from $i$ to $n+1$ which would contradict $i \in I$.
    This
    implies $i \in J$ and thus $i \in I_k$ for some $k$. Now observe that
    $\sigma_i \ge d_{I_k}(i) \ge d_{I}(i)$.

    Next, let $r \in |\B_G|_{\le \sigma}$ be an arbitrary point. We need to
    show that the line segment connecting $r$ and $q$ is contained in
    $|\B_G|_{\le \sigma}$.  Recall that if $i \in I$, then either $d_I(i) = 0$
    or else, by Proposition~\ref{prop:point_label}, there exists a point $p
    \in |\B_G|_{\le \sigma}$ such that $p_i > p_j$ for $ij \in E$.
    Thus, if $r_i > r_j$ for some $ij \in E$, we have $i \in I$ and $q_i =
    \frac{1}{|I|}$. But $q_s \le \frac{1}{|I|}$ for all $s \in [n]$ and
    therefore $q_i \ge q_j$. We conclude that no hyperplane $\th_{ij}$ strictly
    separates $r$ and $q$ and therefore the open line segment $(r,q)$ is
    contained in some (inclusion minimal) cell $C$ of the arrangement $\tA_G$.

    We first confirm that $C \in \B_G$. Let $p \in (r,q)
    \subseteq \relint C$.  Appealing to Proposition \ref{prop:bnd_cell} we need
    to show that the induced orientation on the contraction of $G/p$ has a
    unique sink given by the class containing the vertex $n+1$.  By
    contradiction, assume that $i \in [n]$ corresponds to a sink in $G/p$ that
    is different from $n+1$. Let $i = i_0i_1\dots i_m = n+1$ be a path such
    that $r_{i_{h-1}} \ge r_{i_{h}}$ for all $h = 1,\dots,m$. As $r$ is in the
    bounded subcomplex and hence $G / r$ has a unique sink, such a path
    exists. By assumption, the path is not weakly decreasing for $p$; that is, there
    is an index $l$ with $p_{i_{l-1}} < p_{i_{l}}$.
    In particular we have $p_{i_{l}} > 0$. We have $\supp(p) = \supp(r) \cup \supp(q)$ and, by construction, $\supp(r) \subseteq  \supp(q)$ and hence  $i_l \in I = \supp(q)$ and $q_{i_l}
    = \frac{1}{|I|}$. Thus, the path is weakly decreasing for $q$ which
    implies $p \in (r,q) \subseteq \{ x \in U : x_{i_{l-1}} \ge x_{i_l}\}$.  Recall that $U$ is the affine slice $U = \{ x \in \R^{n+1} : x_{n+1} = 0,\, x_1 + x_2 + \cdots + x_n = 1 \}$.

    It is left to show that $a_C \le \sigma$. For this let $i \in [n]$ with $p_i
    >0$, so that $i \in I$. If $r_i >0$ then since $\supp(r) \subseteq
    \supp(q)$, we have $q_i > q_j \Rightarrow r_i > r_j$ and thus $\sigma_i
    \ge (a_r)_i \ge (a_C)_i$. If $r_i = 0$, then $p_i > p_j \Rightarrow q_i >
    q_j$ and thus $\sigma_i \ge (a_q)_i \ge (a_C)_i$, as desired.
\end{proof}

\begin{thm}\label{thm:main}
With the monomial labeling described above, the complex $\B_G$ supports a minimal cellular resolution of $M_G \subset
\k[x_1,\dots,x_n]$ over every field $\k$.
\end{thm}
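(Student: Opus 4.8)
The plan is to verify the two conditions in Proposition~\ref{prop:cell_res} for the labeled polyhedral complex $\B_G$. The first condition---that $(\B_G)_{\le\sigma}$ is $\k$-acyclic for every $\sigma\in\N^n$---is precisely the content of Lemma~\ref{lem:main}, which establishes the stronger fact that $|\B_G|_{\le\sigma}$ is star-convex (and hence contractible, in particular $\Z$-acyclic and therefore $\k$-acyclic over every field). So the first half of the theorem follows immediately by citing Lemma~\ref{lem:main} together with Proposition~\ref{prop:cell_res}; there is no field dependence since contractibility is a topological statement. I would state this explicitly so the reader sees that $\B_G$ supporting \emph{some} cellular resolution of $M_G$ is already in hand, independently of $\k$.

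The work that remains is minimality, i.e.\ the condition $a_F\neq a_G$ whenever $F\subsetneq G$ are faces of $\B_G$. By Proposition~\ref{prop:point_label}, the label of a cell $C$ is determined by a point $p\in\relint C$ via $(a_C)_i=\#\{ij\in E:p_i>p_j\}$, which is exactly the out-degree of vertex $i$ in the acyclic orientation $\mathcal{O}_p$ of $G/p$ (after identifying vertices in the same part). So it suffices to show: if $F\subsetneq G$ with relative-interior points $p\in\relint F$ and $p'\in\relint G$, then the out-degree sequences of $\mathcal{O}_p$ and $\mathcal{O}_{p'}$ (as vectors in $\N^n$, summed over vertex classes) differ. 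The key structural fact is that $F\subsetneq G$ means the partition $[n+1]=V_1\uplus\cdots\uplus V_s$ associated to $p$ refines the partition associated to $p'$; equivalently, $G/p'$ is obtained from $G/p$ by contracting at least one edge along which $p$ takes distinct values. Passing from $p'$ to $p$ thus \emph{splits} at least one vertex class into two, and the edge(s) between the two resulting classes---which were internal (contracted) for $p'$---become oriented edges for $p$. I would argue that this changes the out-degree vector: at least one such split edge $st$ with $s\in I$, $t\in I^c$ (in the notation of Corollary~\ref{cor:zerocells}) contributes $+1$ to the out-degree of the part containing $s$ in $G/p$ that it did not contribute in $G/p'$, while no out-degree can decrease (the orientation on edges not incident to a split class is unchanged, and any edge internal to $G/p'$ that becomes external in $G/p$ only adds to some out-degree). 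Hence $a_F\ge a_G$ componentwise with strict inequality in at least one coordinate, so $a_F\neq a_G$.

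The cleanest way to organize the minimality argument is probably to reduce to the case where $F$ is a \emph{facet} of $G$ (a codimension-one face), since $a_F\neq a_G$ for all adjacent faces along a maximal chain implies it for all $F\subsetneq G$ by the componentwise-monotonicity of labels along inclusions: if $F\subsetneq G$ then $a_F\le a_G$ always, and if equality held somewhere in the chain it would force equality at a covering pair. For a covering pair, exactly one vertex class of $G/p'$ splits into two classes $A\uplus B$ in $G/p$, and since $G[A\cup B]$ was connected there is at least one edge between $A$ and $B$; that edge is oriented (say $A\to B$) in $\mathcal{O}_p$, so the out-degree of the class containing $A$ strictly increases. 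One must check this class corresponds to a vertex in $[n]$, not the sink class containing $n+1$; but if the splitting produced $n+1$ on the $B$-side then the $A$-side is a genuine $[n]$-vertex whose out-degree jumps, and if $n+1$ is on the $A$-side one uses that $n+1$ must remain the unique sink of $G/p$ (since $F\in\B_G$), forcing some edge to point \emph{out} of $A$ hence into $B$, again bumping an out-degree on the $B$-side, a $[n]$-vertex. I expect this last bookkeeping---pinning down which side contains the sink and confirming the affected out-degree lies in a coordinate indexed by $[n]$ rather than being absorbed into the sink---to be the main obstacle, though it is more a matter of careful case analysis than of any deep idea. Once minimality is established, Proposition~\ref{prop:cell_res} upgrades the cellular resolution of the first paragraph to a minimal one over every $\k$, completing the proof.
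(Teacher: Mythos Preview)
Your overall strategy matches the paper's exactly: invoke Proposition~\ref{prop:cell_res}, use Corollary~\ref{cor:zerocells} and Lemma~\ref{lem:main} to get a cellular resolution, and appeal to Proposition~\ref{prop:point_label} for minimality. The paper's own proof is in fact just a three-line citation of these results, so your elaboration of the minimality step goes beyond what the authors wrote down.

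However, your minimality argument has the direction of refinement backwards. If $F\subsetneq G$ with $p\in\relint F$ and $p'\in\relint G$, then $F$ lies on \emph{more} hyperplanes of $\tA_G$ than $G$ does, so $p$ satisfies more equalities $p_i=p_j$ along edges. Hence the partition associated to $p$ is \emph{coarser} than that of $p'$, and $G/p$ is obtained from $G/p'$ by contraction, not the other way around. Passing from $p'$ to $p$ merges classes and turns oriented edges into internal ones; it does not split anything. You in fact state the correct inequality $a_F\le a_G$ in your reduction-to-covering-pairs paragraph, which directly contradicts the conclusion ``$a_F\ge a_G$ componentwise with strict inequality'' from the paragraph before.

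With the direction corrected, the argument is shorter than you anticipate. For a covering pair $F\lessdot G$ there is exactly one additional hyperplane $\th_{st}$ containing $F$; say $p'_s>p'_t$ while $p_s=p_t$. Then by Proposition~\ref{prop:point_label} the edge $st$ is counted in $(a_G)_s$ but not in $(a_F)_s$, so $(a_F)_s<(a_G)_s$. The worry that $s$ might equal $n+1$ dissolves immediately: since $G\in\B_G$, the class of $n+1$ in $G/p'$ is the unique sink and therefore has no outgoing edges, so $s\neq n+1$. Your two-case analysis about which side of the split contains the sink is thus unnecessary once the direction is right.
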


\begin{proof}
    We apply the criteria from Proposition \ref{prop:cell_res}.  Corollary~\ref{cor:zerocells} and Lemma~\ref{lem:main} imply that $\B_G$, with the monomial labeling described in Proposition \ref{prop:point_label}, supports a cellular resolution of $M_G$.  Proposition~\ref{prop:point_label} asserts that the cellular resolution is indeed minimal.
    \end{proof}

\begin{example}
In the case of our running example, we obtain a free minimal resolution of $M_G$ from the labeled complex in Figure \ref{fig:monlabels}.  Disregarding the grading, the resolution has the form
\[ 0 \ \leftarrow \ R \ \xleftarrow{\phi_1} \ R^6 \ \xleftarrow{\phi_2} \ R^8 \xleftarrow{\phi_3} \ R^3 \ \xleftarrow{\phi_4} \ 0.
\]
\noindent
In particular the Betti numbers $\beta_i$ are given by the face numbers $f_{i-1}(\B_G)$, and the differentials are described by the incidence relations of $\B_G$.
\end{example}

In addition we obtain an explicit combinatorial formula for the Betti numbers
of $I_G$, verifying a conjecture of Perkinson, Perlman, and Wilmes \cite[Conjecture 7.9]{PPW}.

\begin{cor} \label{cor:Betti_nums}
For a graph $G$ let $P_k$ denote the elements of $L_G$ that have $k$
components.  For $S \in P_k$ let $G/S$ be the graph induced on $S$, with $k$
vertices given by contracting the elements of $S$, while preserving the edges
between elements of $S$.  Then the (non-graded) Betti numbers of the ideal
$M_G$ are given by
\[
    \beta_k(M_G) \ = \ \sum_{S \in P_{k+1}} \#\{\textrm{$c$\;:\;$c$ a minimal
    recurrent configuration on $G/S$} \}.
\]
\end{cor}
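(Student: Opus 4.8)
The plan is to read the Betti numbers off the minimal resolution of Theorem~\ref{thm:main} and then count the cells of $\B_G$ according to the flat of $L_G$ that each of them determines. Since $\B_G$ supports a \emph{minimal} cellular resolution of $M_G$, the $i$-th free module in that resolution has rank equal to the number of $(i-1)$-dimensional cells of $\B_G$, so $\beta_k(M_G) = f_{k-1}(\B_G)$. Hence it suffices to prove
\[
f_{k-1}(\B_G) \ = \ \sum_{S \in P_{k+1}} \#\{\,c : c \text{ a minimal recurrent configuration on } G/S\,\}.
\]

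First I would stratify the cells of $\B_G$. Given a cell $C \in \B_G$, choose $p \in \relint C$; the construction preceding Proposition~\ref{prop:bnd_cell} attaches to $p$ a partition $[n+1] = V_1 \uplus \cdots \uplus V_s$ into connected blocks, equivalently an element $S = S(C) \in L_G$, and this depends only on $C$: it is the coarsest flat with $C \subseteq L_S$, where $L_S \subset U$ is the affine flat $\bigcap\{\th_{ij} : ij \in E,\ \{i,j\} \subseteq V_k \text{ for some } k\}$. By Proposition~\ref{prop:bnd_cell}, $\dim C + 2 = |V(G/p)| = |V(G/S)|$ equals the number of blocks of $S$, so $C$ is a $(k-1)$-cell exactly when $S(C) \in P_{k+1}$; this sorts the $(k-1)$-cells of $\B_G$ by which $S \in P_{k+1}$ they determine.

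Second, I would identify, for a fixed $S \in P_{k+1}$, the set of $(k-1)$-cells $C$ with $S(C) = S$. As in the proof of Proposition~\ref{prop:bnd_cell}, any such $C$ is a full-dimensional cell of the arrangement induced by $\tA_G$ inside $L_S$, and that induced arrangement is isomorphic to $\tA_{G/S}$ (note $G/S$ is connected and loopless, since $G$ is connected and each $G[V_k]$ is connected). Hence these cells are exactly the regions of $\tA_{G/S}$, and $C$ is bounded in $U$ if and only if the corresponding region is bounded, $L_S$ being an affine subspace. Applying Proposition~\ref{prop:bnd_cell} now to $G/S$ itself — with the block containing $n+1$ as the distinguished sink — its bounded regions biject with the acyclic orientations of $G/S$ having a unique sink at that block. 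Finally, the dictionary of Section~\ref{sec:chip} (\cite[Thm.~3.1]{BCT} together with \cite[Cor.~5.15]{PPW}) says that for the connected graph $G/S$ these acyclic orientations biject with the maximal superstable configurations, equivalently the maximal $G/S$-parking functions, equivalently the minimal recurrent configurations of $G/S$. Summing over $S \in P_{k+1}$ gives the displayed identity, which is exactly \cite[Conjecture 7.9]{PPW}.

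The step I expect to require the most care is the identification in the previous paragraph: one must check that $C \mapsto S(C)$, together with recording which region of $\tA_{G/S}$ the cell $C$ is, is a bijection between the $(k-1)$-cells of $\B_G$ and the disjoint union, over $S \in P_{k+1}$, of the bounded regions of $\tA_{G/S}$ — that nothing is double-counted and that boundedness transfers faithfully across the isomorphism with $\tA_{G/S}$. These facts are already implicit in the proof of Proposition~\ref{prop:bnd_cell} (the arrangement induced in $L_S$ is $\tA_{G/S}$, and the halfline arguments there characterize boundedness intrinsically), so once they are spelled out the remainder is bookkeeping together with the cited combinatorial correspondences.
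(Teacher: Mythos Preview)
Your proposal is correct and follows essentially the same approach as the paper: use Theorem~\ref{thm:main} to identify $\beta_k(M_G)$ with $f_{k-1}(\B_G)$, invoke Proposition~\ref{prop:bnd_cell} to identify $(k-1)$-cells with acyclic orientations of contractions $G/S$ having unique sink at the class of $n+1$, and then translate via the Section~\ref{sec:chip} dictionary to minimal recurrent configurations. Your stratification by $S(C)$ and the check that boundedness transfers along the isomorphism with $\tA_{G/S}$ make explicit what the paper leaves implicit, but the argument is the same.
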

\begin{proof}
Let $S$ be an element of $P_{k+1}$.  As we have seen, the minimal recurrent
configurations of $G/S$ correspond to the maximal $G/S$-parking functions,
which are turn in bijection with acyclic orientations of $G/S$ with unique
sink $n+1$.  From Proposition~\ref{prop:bnd_cell} we know that this set is
in bijection with the $k-1$ cells of ${\mathcal B}_G$.  Theorem \ref{thm:main} shows
that $k-1$ cells of ${\mathcal B}_G$ index $\beta_k(M_G)$.
\end{proof}

\begin{figure}[H]
\begin{center}
    \includegraphics[scale=0.87]{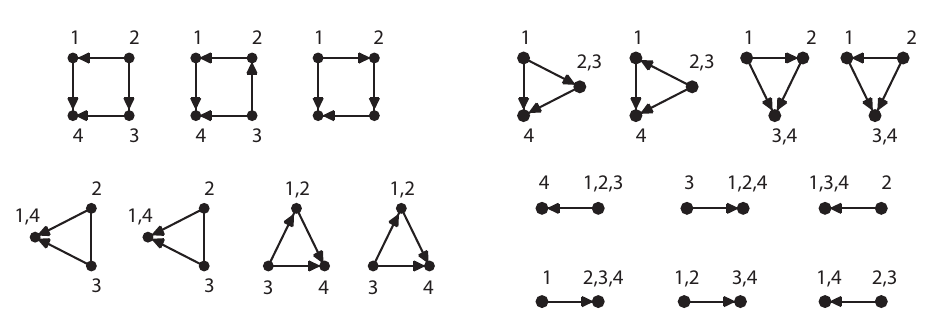}
\caption{The acyclic orientations of our example $G$ with vertex 4 as the unique sink, enumerating the syzygies of $M_G$: $\beta_3 = 3$, $\beta_2 = 8$, $\beta_1 = 6$.
}
\label{fig:orientations}
\end{center}
\end{figure}

\begin{cor}\label{cor:koszul}
    If $G$ is a tree on the vertices $[n+1]$, then $M_G = \langle x_1,
    x_2,\dots, x_n\rangle$ and $B_G$ is an $(n-1)$-dimensional simplex
    realizing the Koszul complex.
\end{cor}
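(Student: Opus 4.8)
The plan is to unpack both claims of Corollary~\ref{cor:koszul} directly from the definitions, using the machinery already assembled. For the ideal, I would observe that when $G$ is a tree, every vertex $i \in [n]$ has a unique neighbor on the path toward $n+1$; taking $I = \{i\}$ gives $d_I(i) = \deg_G(i) \geq 1$ but the relevant generator $m_{\{i\}} = x_i^{\deg(i)}$ is not the whole story---instead I want the singleton-type subsets that produce \emph{linear} generators. More carefully: for a leaf $i$ (with unique neighbor, possibly $= n+1$), $m_{\{i\}} = x_i$. For a non-leaf $i$, pick $I$ to be the set of vertices in the component of $i$ after deleting the edge from $i$ toward $n+1$; then $G[I]$ and $G[I^c]$ are both connected (trees), and $d_I(i) = 1$ with $d_I(j) = 0$ for $j \in I \setminus \{i\}$, so $m_I = x_i$. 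Hence every variable $x_i$ lies in $M_G$, and since $M_G \subseteq \langle x_1,\dots,x_n\rangle$ trivially (each $m_I$ is a nonconstant monomial), we get $M_G = \langle x_1,\dots,x_n\rangle = \mathfrak{m}$.

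For the geometric claim, I would apply Theorem~\ref{thm:main}: $\B_G$ supports the \emph{minimal} cellular resolution of $M_G = \mathfrak{m}$. The minimal free resolution of the maximal ideal $\langle x_1,\dots,x_n\rangle$ in $R = \k[x_1,\dots,x_n]$ is the Koszul complex, which is supported on the $(n-1)$-simplex $\Delta_{n-1}$ (with vertices labeled $x_1,\dots,x_n$ and face $F$ labeled $\prod_{i \in F} x_i$). Since a minimal cellular resolution is determined by its face poset together with the labels---and in particular the number of $k$-cells of $\B_G$ must equal $\beta_{k+1}(\mathfrak{m}) = \binom{n}{k+1}$ for every $k$---the complex $\B_G$ is a polyhedral complex with exactly $\binom{n}{k+1}$ faces of dimension $k$, i.e. the $f$-vector of $\Delta_{n-1}$. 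Combined with the fact (from Corollary~\ref{cor:zerocells} and Proposition~\ref{prop:bnd_cell}) that $\B_G$ is a bounded polyhedral complex whose faces are indexed by acyclic orientations of contractions of $G$ with unique sink $n+1$, and that for a tree these orientations are in bijection with the nonempty subsets $I \subseteq [n]$ forming connected subtrees together with connected complement, one identifies the face poset of $\B_G$ with the Boolean lattice on $[n]$ minus top, which is precisely the face poset of $\Delta_{n-1}$.

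Alternatively, and perhaps more cleanly, I would argue geometrically that $\tA_G$ itself is a generic arrangement when $G$ is a tree. For a tree on $n+1$ vertices there are exactly $n$ edges, so $\tA_G$ consists of $n$ affine hyperplanes in $U \cong \R^{n-1}$; because the edges of a tree are "independent" in the graphic matroid (indeed they form a basis), the associated hyperplanes are in general position, and the bounded complex of $n$ hyperplanes in general position in $\R^{n-1}$ meeting appropriately is a single $(n-1)$-dimensional simplex---concretely, $\B_G$ is the closure of the region $\{x \in U : x_i > x_j$ whenever $j$ lies between $i$ and $n+1$ in $T\}$, a simplex whose vertices are the points $\tfrac{1}{|I|}e_I$ for the subtree-subsets $I$ described above. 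One then checks the monomial labels match the Koszul labeling $F \mapsto \prod_{i \in \text{vertices of } F} x_i$ via Proposition~\ref{prop:point_label}, since for a tree each edge $ij$ contributes exactly once to exactly one coordinate.

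The main obstacle I anticipate is the bookkeeping in the second approach: verifying rigorously that the $0$-cells of $\B_G$ are exactly the $n$ points of the form $\tfrac{1}{|I_i|}e_{I_i}$, one per vertex $i \in [n]$, and that these are affinely independent (so their convex hull is a genuine $(n-1)$-simplex rather than something degenerate), and finally that $\B_G$ equals that full simplex and not a proper subcomplex. This is where connectivity of $G[I]$ and $G[I^c]$ for all the relevant $I$, plus the unique-sink criterion of Proposition~\ref{prop:bnd_cell}, must be invoked with care. Given that Theorem~\ref{thm:main} already guarantees minimality and the correct total Betti numbers, the cleanest writeup is probably the hybrid: use Theorem~\ref{thm:main} to pin down that $\B_G$ has the $f$-vector and face poset of $\Delta_{n-1}$, and use Proposition~\ref{prop:point_label} to confirm the labels realize the Koszul differential, so only a short direct verification remains.
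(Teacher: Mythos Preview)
Your argument is correct, but the paper's proof is considerably shorter because it goes straight to the acyclic-orientation count via Proposition~\ref{prop:bnd_cell}. The key observation is that a tree has exactly one acyclic orientation with a designated unique sink (orient every edge toward $n+1$), and every contraction of a tree is again a tree, so the same uniqueness holds for every $G/p$. Hence $\B_G$ has a single top-dimensional bounded cell, and that cell has exactly $n$ facets (one per edge contraction), forcing it to be an $(n-1)$-simplex. This bypasses both your explicit computation of $M_G$ (which then falls out of the vertex labels of the simplex via Corollary~\ref{cor:zerocells}) and the $f$-vector/face-poset bookkeeping you outline. Your route through Theorem~\ref{thm:main} and the Koszul Betti numbers is sound, but note that the face poset of $\B_G$ is naturally the Boolean lattice on the $n$ \emph{edges} of $G$ (recording which edges are contracted), not on the vertex set $[n]$ as you wrote; the two sets happen to have the same cardinality here, which is why your count comes out right. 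Your alternative geometric approach---$n$ hyperplanes in general position in $U\cong\R^{n-1}$---is closest in spirit to the paper's opening line, but the paper replaces the general-position verification with the one-line orientation argument, which is cleaner and avoids the affine-independence check you flagged as an obstacle.
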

\begin{proof}
    In this case $\tA_G$ is an arrangement of $n$ hyperplanes in $U \cong
    \R^{n-1}$. It is easy to see that the only acyclic orientation of $G$ with unique sink $n+1$ is
    obtained by orienting all edges towards $n+1$. And, since any contraction
    of $G$ is again a tree, we see that $\B_G$ has a unique bounded cell of
    dimension $n-1$ with $n$ facets, i.e. $\B_G$ is an $(n-1)$-dimensional simplex.
\end{proof}

Lastly we note that, in principle, Theorem~\ref{thm:main} gives an
\emph{algebraic} approach to acyclic orientations of $G$ and the study of the
face poset of $\B_G$ by means of minimal $\Z^n$-graded resolutions. However,
enumerating acyclic orientations of $G$ is $\#P$-hard~\cite{linial} and we do
not expect the algebraic method to be efficient.

\subsection{Duality}\label{sec:duality}

As detailed in \cite{MS}, the duality involved in the discrete Riemann Roch theory of certain graphs $G$ has an commutative algebraic analogue in terms of the ideal $M_G^* = M_G^{[{\bf k} + {\bf 1}]}$.  Here $M_G^*$ is the Alexander dual ideal of $M_G$ with respect to the monomial
\[
    \x^{\bf k + \bf 1} \ = \ x_1^{\deg(1)} x_2^{\deg(2)} \cdots x_n^{\deg(n)},
\]
where $\mathbf{1} = (1,1,\dots,1)$.

As a corollary to Theorem \ref{thm:main} we may apply the notion of duality of cellular resolutions \cite{MS} to obtain a (co)cellular minimal resolution of $M_G^*$.  For this we will need the following result from the literature.

\begin{prop}[{\cite[Thm~5.37]{MilStu}}]\label{prop:dual_res}
    Fix a monomial ideal $I$ generated in degrees preceding ${\bf a}$ and a
    cellular resolution ${\mathcal F}_{X}$ of $R/(I + \x^{\bf a + 1})$ such
    that all face labels on $X$ precede ${\bf a + 1}$.  If $Y = {\bf a} + {\bf
    1} - X$, then ${\mathcal F}^{Y_{\preceq {\bf a}}}$ is a weakly cocellular
    resolution of $I^{[{\bf a}]}$.  The resolution supported by $Y_{\preceq
    {\bf a}}$ is minimal if ${\mathcal F}_{X}$ is minimal.
\end{prop}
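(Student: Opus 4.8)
This is Theorem~5.37 of \cite{MilStu}, so the plan is to recall why it holds rather than to reprove it from scratch. The starting point would be the combinatorial criterion for (co)cellular resolutions: just as ${\mathcal F}_{X}$ resolves $M_X$ precisely when every $X_{\le \sigma}$ is $\k$-acyclic (Proposition~\ref{prop:cell_res}), a labeled complex $Y$ supports a cocellular resolution of the ideal cogenerated by its labels precisely when the corresponding ``upper'' subcomplexes $Y^{\ge \tau}$ are $\k$-acyclic for all $\tau \in \N^n$. So the statement should reduce, via these two criteria, to an identity between acyclicity of certain subcomplexes of $X$ and acyclicity of the corresponding subcomplexes of $Y$.

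The bridge I would use is combinatorial Alexander duality. Writing $\mathbf{b} = \mathbf{a} + \mathbf{1}$, the hypotheses that ${\mathcal F}_{X}$ resolves $R/(I + \x^{\mathbf{b}})$ and that all face labels precede $\mathbf{b}$ say, by Proposition~\ref{prop:cell_res}, that $X_{\le \mathbf{c}}$ is $\k$-acyclic for every $\mathbf{c} \in \N^n$. The key step would then be to check that, for each $\mathbf{c}$, the polyhedral subcomplex $X_{\le \mathbf{c}}$ and the corresponding subcomplex of $Y = \mathbf{b} - X$ (the complex carrying the complemented labels $a_F \mapsto \mathbf{b} - a_F$) are Alexander dual, so that their reduced homologies agree up to a fixed shift in homological degree; in particular one is acyclic if and only if the other is. Feeding this back through the cocellular criterion gives that $Y_{\preceq \mathbf{a}}$ supports a (weakly) cocellular resolution of $I^{[\mathbf{a}]}$. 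For minimality, I would observe that the condition $a_F \ne a_G$ for every proper inclusion $F \subset G$ in $X$ is preserved under complementation of labels, so the minimality clause of the criterion applies to $Y$ as well.

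The hard part here — and the reason this is cited rather than proved in the paper — is making the Alexander-duality correspondence precise for arbitrary polyhedral (not necessarily simplicial) labeled complexes, together with the bookkeeping of the ``$+\mathbf{1}$'' shifts that are forced both by the definition of the Alexander dual $I^{[\mathbf{a}]}$ and by the translation between monomial generators and irreducible (cogenerator) decompositions. Once that machinery is in place, the proof is the short chain of equivalences sketched above, and I would simply refer to \cite[Ch.~5]{MilStu} for the details.
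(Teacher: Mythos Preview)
The paper does not prove this proposition: it is stated with attribution to \cite[Thm.~5.37]{MilStu} and used as a black box, with no argument given in the paper itself. So there is no ``paper's own proof'' to compare your proposal against. Your sketch --- reducing to acyclicity criteria on both sides and linking them via Alexander duality of the labeled subcomplexes, with the $+\mathbf{1}$ bookkeeping handled by the definitions --- is a fair summary of the strategy in \cite[Ch.~5]{MilStu}, and you correctly flag that the delicate part is making the duality precise for polyhedral (not just simplicial) labeled complexes. For the purposes of this paper, simply citing the reference, as the authors do, is the intended ``proof''.
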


In our context we take $G$ to be our graph on vertex set $[n+1]$ and let $\B_G$ denote the labeled polyhedral complex defined above.  We define $\bar {\B}_G$ to be the \emph{colabeled} polyhedral complex with underlying complex $\B_G$ but with monomial label on a face $C$ given by
\[(\bar a_C)_i \ = \ \deg(i) + 1 - (a_C)_i.
\]

\noindent
Combining our Theorem \ref{thm:main} with Proposition \ref{prop:dual_res} gives us the following.

\begin{prop}
Let $G$ be a graph on vertex set $[n+1]$ and set
\[{\bf a} = (\deg(1), \deg(2), \dots, \deg(n)).\]
\noindent
Then with the notation established above, the labeled complex $\big(\bar{\B}_G \big)_{\preceq {\bf a}}$ supports a minimal cocellular resolution of the ideal $M_G^*$.
\end{prop}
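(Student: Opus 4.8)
The plan is to verify the hypotheses of Proposition~\ref{prop:dual_res} for the cellular resolution produced by Theorem~\ref{thm:main} and then identify the resulting colabeled complex with $\big(\bar{\B}_G\big)_{\preceq {\bf a}}$. First I would observe that ${\bf a} = (\deg(1),\dots,\deg(n))$ dominates every generator degree of $M_G$: for any $0$-cell $v = \tfrac{1}{|I|}e_I$ of $\B_G$, Corollary~\ref{cor:zerocells} gives $(a_v)_i = d_I(i) \le \deg(i)$, since $d_I(i)$ only counts the edges from $i$ leaving $I$, which are among the $\deg(i)$ edges at $i$. Hence $M_G$ is generated in degrees preceding ${\bf a}$, and in fact all face labels $a_C$ of $\B_G$ precede ${\bf a + 1}$ since they are coordinatewise maxima of the $0$-cell labels, each of which is $\le {\bf a} \prec {\bf a + 1}$.

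Next I would upgrade the cellular resolution of $M_G$ from Theorem~\ref{thm:main} to a cellular resolution of $R/(M_G + \x^{\bf a + 1})$, as required by the hypothesis of Proposition~\ref{prop:dual_res}. The standard device here (see \cite{MilStu}) is to adjoin to $\B_G$ a new top cell — a ``phantom'' facet carrying the label ${\bf a + 1}$ — so that the resulting complex, call it $\B_G^+$, still satisfies the acyclicity criterion of Proposition~\ref{prop:cell_res}: for $\sigma \not\succeq {\bf a+1}$ one has $(\B_G^+)_{\le\sigma} = (\B_G)_{\le \sigma}$, which is $\k$-acyclic by Lemma~\ref{lem:main}, and for $\sigma \succeq {\bf a+1}$ the complex $(\B_G^+)_{\le \sigma}$ is a cone over $(\B_G)_{\le \sigma}$ with apex the new cell, hence contractible. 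This realizes $R/(M_G + \x^{\bf a+1})$ via a cellular resolution whose underlying complex is $\B_G$ together with one extra face, all of whose labels precede ${\bf a + 1}$. Minimality is inherited from Theorem~\ref{thm:main}.

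Then I would apply Proposition~\ref{prop:dual_res} with $X = \B_G^+$ and ${\bf a}$ as above, concluding that $Y = {\bf a + 1} - X$ supports (after restricting to $Y_{\preceq {\bf a}}$) a minimal weakly cocellular resolution of $M_G^{[{\bf a}]} = M_G^*$. The remaining task is a bookkeeping identification: the complex $Y_{\preceq {\bf a}}$ has the same underlying polyhedral complex $\B_G$ (the phantom facet disappears under the restriction $\preceq {\bf a}$, since its label ${\bf a + 1}$ does not precede ${\bf a}$), and the colabel of a face $C$ is $({\bf a + 1}) - {\bf 1} - a_C = {\bf a} - a_C$, i.e.\ $(\bar a_C)_i = \deg(i) - (a_C)_i$. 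This is exactly the colabeled complex $\bar{\B}_G$ of the statement, so $\big(\bar{\B}_G\big)_{\preceq {\bf a}}$ supports a minimal cocellular resolution of $M_G^*$, as claimed.

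The main obstacle I anticipate is the ``phantom facet'' step: one must check carefully that adjoining a single cell with label ${\bf a + 1}$ (rather than, say, having to cone over the whole complex or add several cells) genuinely produces a labeled polyhedral complex satisfying Proposition~\ref{prop:cell_res} and that the labels stay $\preceq {\bf a + 1}$ — in particular that the new cell can be attached along all of $\B_G$ coherently. An alternative that sidesteps this is to quote the form of Alexander duality for cellular resolutions used in \cite{MS} directly, which already packages $M_G$ and its dual symmetrically; but invoking Proposition~\ref{prop:dual_res} as stated is cleaner since it is the cited black box, so I would make the phantom-facet construction explicit. The rest — degree domination by ${\bf a}$, the arithmetic of the colabels, and tracking minimality — is routine.
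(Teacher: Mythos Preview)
Your overall plan---feed Theorem~\ref{thm:main} into Proposition~\ref{prop:dual_res}---is exactly what the paper does; indeed the paper's proof is that one sentence and nothing more. Your elaboration is largely on target, but the step you single out as the ``main obstacle'' is both unnecessary and, as you describe it, not quite coherent.

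It is unnecessary because $\x^{{\bf a}+{\bf 1}}$ already lies in $M_G$: for each $i\in[n]$ the singleton $I=\{i\}$ gives $m_{\{i\}}=x_i^{d_{\{i\}}(i)}=x_i^{\deg(i)}$, which divides $x_i^{\deg(i)+1}$. Hence $M_G+\langle\x^{{\bf a}+{\bf 1}}\rangle=M_G$, and the cellular resolution of $R/M_G$ supported on $\B_G$ (together with the empty face, which supplies the copy of $R$) already satisfies the hypothesis of Proposition~\ref{prop:dual_res} verbatim. Nothing needs to be adjoined, and the obstacle you anticipated evaporates.

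As written, the phantom construction is also not coherent: adjoining a single \emph{top} cell to $\B_G$ cannot make the complex into a cone (a cone has a $0$-dimensional apex), nor is it clear how a new maximal cell would attach along all of $\B_G$ to form a polyhedral complex. If an extra generator $\x^{{\bf a}+{\bf 1}}$ were genuinely needed, the standard device would be to add a new \emph{vertex} with that label and cone the existing complex over it---not a facet. So had this step actually been required, your argument for it would have a real gap; fortunately it is not required.
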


\begin{example}
In our running example, we have ${\bf a} = (2,2,2)$, and
\[
M_G^* = M_G^{\bf a} = \langle x_1x_2^2x_3^2, x_1^2x_2x_3^2, x_1^2x_2^2x_3 \rangle.
\]
The colabeled complex $\bar {\B}_G$ is depicted below, along with the sub complex supporting the minimal resolution of $M_G^*$.
\end{example}

\begin{figure}[H]
\begin{center}
    \includegraphics[scale=0.85]{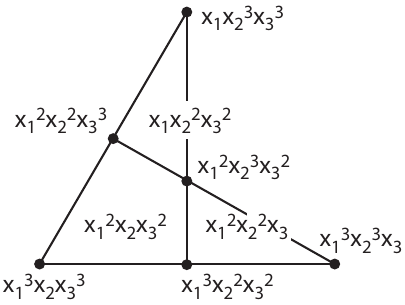} \hspace{.25 in}
    \includegraphics[scale=0.85]{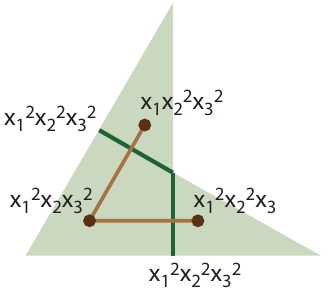}
\caption{The colabeled complex $\bar {\B}_G$ (with 0-cells and 2-cells labeled) and the subcomplex $\big(\bar{\B}_G \big)_{\preceq {\bf a}}$, consisting of three 2-cells and two 1-cells.  The dual complex is also depicted.}
\label{fig:dualres}
\end{center}
\end{figure}

\begin{rem}
For the case of $G = K_{n+1}$, the complete graph on $n+1$ vertices, this is the duality between the (resolutions of the) \emph{tree ideals} and the \emph{permutohedron ideals} described in {\cite[Example~5.44]{MilStu}}.
\end{rem}

\section{Further questions}

\subsection{Toppling ideals}

As mentioned in Section~\ref{sec:graphs}, there is a monomial term
order $\preceq$ for $\k[x_1,\dots,x_{n+1}]$ for which the ideal $M_G$ is the
initial ideal of $I_G$. A natural question to ask is whether one can describe
a minimal cellular resolution of the lattice binomial ideal $I_G$.  A
construction presents itself in terms of a quotient of the unimodular
\emph{graphic lattice} associated to $G$, similar to the cellular resolutions
of binomial Lawrence ideals described in \cite{BayPopStu}.  Mohammadi and
Shokrieh informed us about progress along these lines that will be published
in a sequel to their recent paper \cite{MohSho}.

\subsection{Monotone monomial ideals}

In \cite{PS} the authors study \emph{monotone monomial ideals}, a class of
monomial ideals that are strictly more general those arising as $M_G$ for a
graph $G$.  We recall the definition here.  A \Defn{monotone monomial family}
$\mathcal{M} = \{m_I:I \in \Sigma\}$ is a collection of monomials indexed by a
set $\Sigma$ of nonempty sets in $[n]$ that satisfy the conditions
\begin{itemize}
\item[(MM1)] For $I \in \Sigma$, $\supp(m_I) \subseteq I$,
\item[(MM2)] For $I, J \in \Sigma$ such that $I\subset J$, we have $m_I$
    divides $m_J$.
    \item[(MM3)] For $I, J \in \Sigma$, $\mathsf{lcm}(m_I,m_J)$ is divisible by $m_k$ for some $K \supset I \cup J$ in $\Sigma$.
\end{itemize}

We then define the \Defn{monotone monomial ideal} $\langle \mathcal M \rangle$ associated to the family ${\mathcal M}$ to be the ideal generated by the monomials $m_I$ in ${\mathcal M}$.

The monomial ideals $M_G$ associated to a (directed) graph $G$ described above are monotone.  In this case $\Sigma$ is the set of all nonempty subsets of $[n]$ and $m_I$ is given by the formula \ref{def:m_I}.  The natural question to ask is if a similar construction to ${\mathcal B}_G$ can be used to resolve the ideals $\langle \mathcal M \rangle$.  Is there an arrangement of hyperplanes corresponding to a monotone family?

\subsection{Shi arrangements and duality}

In his study of the affine Weyl group of type $A_{n-1}$, Shi
introduced the arrangement ${\mathcal S}_n$ of hyperplanes in
$\R^n$ that now bears his name:
\[
    \mathcal{S}_n \ = \ \{x_i - x_j = 0,1 : 1 \leq i < j \leq n \}.
\]

Shi proved that the number of regions in the complement of
${\mathcal S}_n$ is given by $(n+1)^{n-1}$ (the number of trees on
$n+1$ labeled vertices).  Pak and Stanley \cite{Sta98} gave the
first bijective proof of this fact by providing an explicit
labeling of the regions with parking functions.  In \cite{HopPerBi}
Hopkins and Perkinson generalize this picture, motivated by a
conjecture of Duval, Klivans, and Martin.  Associated to a graph $G$
they define what they call a \emph{bigraphical arrangement}
and show that a Pak-Stanley type labeling of its regions are in
bijection with the $G$-parking functions.  Specifying certain parameters of the bigraphical arrangements recover the $G$-Shi and $G$-semiorder arrangements.

From Theorem \ref{thm:main}, we obtain a minimal resolution of
the ideal $M_G$ from the graphical arrangement of $G$.  We also
know that when $M_G$ is \emph{Riemann-Roch} (in the sense of \cite{MS}) the generators of $M_G^*$ (the Alexander dual of $M_G$) are
given by the maximal $G$-parking functions.  It would be interesting
to find a connection between the bigraphical arrangements of
\cite{HopPerBi} and the the cellular resolutions that we have
considered here.

\subsection{Topology of the partition poset}
In \cite{BjoWac} Bj\"orner and Wachs use the graphical hyperplane arrangement of the complete graph (the so-called \emph{braid arrangement}) to give an explicit basis for the homology of the partition poset.  It would be interesting to connect this study to the resolutions of the ideals studied here, and in particular to consider the case of a general graph $G$.

{\bf Acknowledgements.}
We would like to thank Michelle Wachs, Christian
Haase, Bernd Sturmfels, Katharina Jochemko, and Francesco Grande for stimulating
discussions.   We thank the two anonymous referees for very helpful comments and corrections.

\bibliographystyle{siam}

\bibliography{Laplacian_ideal_resolution}

\end{document}